\documentclass[12pt]{article}
\usepackage{mathrsfs}
\usepackage{amsthm}
\usepackage{amssymb}
\usepackage{latexsym}
\usepackage{amsmath,amsfonts}
\usepackage{mathrsfs}
\usepackage{cases}
\usepackage{latexsym,bm}
\usepackage{indentfirst}
\usepackage{xcolor}
\usepackage{ifpdf}
\usepackage{graphicx}
\usepackage{epstopdf}
\usepackage{epsfig}
\usepackage{psfrag}
\usepackage{enumitem}
\usepackage{epstopdf}
\usepackage{verbatim}
\usepackage{color}
\usepackage{subfigure}
\usepackage{caption}

\usepackage[
pdfauthor={Lu},
pdftitle={matching covered},
pdfstartview=XYZ,
bookmarks=true,
colorlinks=true,
linkcolor=blue,
urlcolor=blue,
citecolor=blue,
bookmarks=true,
linktocpage=true,
hyperindex=true
]{hyperref}
 
\usepackage[textwidth=16cm, textheight=22cm, includehead, includefoot]{geometry}

\title{Adjacent vertices of small degree in minimal matching covered graphs
\footnote{The research is supported by NSFC (Grant No. 12271229 and 12271235), Fujian Key Laboratory of Granular Computing and Applications, Institute of Meteorological Big Data-Digital Fujian and Fujian Key Laboratory of Data Science and Statistics.
\newline E-mail addresses: xiaolinghe99@163.com (X. He), flianglu@163.com (F. Lu), zhanghp@lzu.edu.cn(H. Zhang).} }
\author{ Xiaoling He $^1$, Fuliang Lu $^2$, Heping Zhang $^1$\\
\small {1. School of Mathematics and Statistics, Lanzhou University, Lanzhou, China}\\
\small {2. School of Mathematics and Statistics, Minnan Normal University, Zhangzhou, China}
}

\date{}

\newtheorem{lem}{Lemma}[section]
\newtheorem{thm}[lem]{Theorem}
\newtheorem{cor}[lem]{Corollary}

\newtheorem{pro}[lem]{Proposition}
\newtheorem{conj}[lem]{Conjecture}

\newtheorem{remark}[lem]{Remark}

\newtheorem*{claim}{Claim}

\makeatletter
\renewenvironment{claim}[1][]{%
  \par\addvspace{\topsep}
  \noindent{\textbf{Claim #1.}}\ignorespaces%
  \itshape
}{\par\addvspace{\topsep}}
\makeatother

\begin{document}
\bibliographystyle{plain}
\newcommand{\udots}{\mathinner{\mskip1mu\raise1pt\vbox{\kern7pt\hbox{.}}
\mskip2mu\raise4pt\hbox{.}\mskip2mu\raise7pt\hbox{.}\mskip1mu}}
\maketitle
\begin{abstract}
A connected  graph $G$ with at least two vertices is {\em matching covered} if each of its edges lies in a perfect matching.  
A matching covered graph is {\em minimal} if the removal of any edge results in a graph that is no longer matching covered.
An edge is called a {\em $k$-line} if both of its end vertices are of degree $k$.
Lov\'asz and Plummer [J. Combin. Theory, Ser. B  23 (1977) 127--138] proved that a minimal matching covered bipartite graph different from $K_2$ has minimum degree 2 and contains at least $[(|V(G)|+15)/6]$ 2-lines by ear decompositions.  
He et al. [J. Graph Theory 111 (2026) 5--16] showed that the minimum degree of a minimal matching covered graph different from $K_2$ is either 2 or 3.
In this paper, we prove that every minimal matching covered graph with at least 4 vertices contains at least two nonadjacent edges, each of which is either a 2-line or a 3-line. Consequently, we show that every minimal matching covered graph with at least 4 vertices and minimum degree 3 contains at least 4 vertices of degree 3. Furthermore, the lower bounds for both the number of 3-lines and the number of cubic vertices are sharp. 
\\

\par {\small {\it Keywords:} Minimal matching covered graph; Wheel-like brick; 3-line} 
\end{abstract}
\vskip 0.2in \baselineskip 0.1in
 
\section{Introduction}

Graphs considered in this paper may have multiple edges, but no loops. We follow \cite{BM08} for undefined notation and terminology.
Let $G$ be a graph with the vertex set $V(G)$ and the edge set $E(G)$.
For a vertex $u\in V(G)$, the {\em degree} of $u$ in $G$,
denoted by $d_G(u)$ or simply $d(u)$, is the number of edges of $G$ incident with $u$.
Specifically, a vertex $u$ is called a {\em cubic vertex} of $G$ if $d_G(u)=3$.
We denote by $\delta(G)$ and $\Delta(G)$ the {\em minimum degree} and the {\em maximum degree} of $G$, respectively.

An edge $e$ of a graph $G$  is {\em allowed} if there exists some perfect matching of $G$ containing $e$.
A connected nontrivial graph $G$ is {\em matching covered} if each of its edges is allowed.  
We say that an edge $e$ in a matching covered graph $G$ is {\em removable} if $G-e$ is matching covered. A pair $\{e,f\}$ of edges of a matching covered graph $G$ is a {\em removable doubleton} if $G-e-f$ is matching covered, but neither $G-e$ nor $G-f$ is. Removable edges and removable doubletons are called {\em removable classes}. 
We say that a matching covered graph $G$ is {\em minimal} if $G-e$ is not a matching covered graph for any edge $e$ in $G$, equivalently, $G$ contains no removable edges.

An edge of a graph is called a {\em $k$-line} if both of its end vertices are of degree $k$.
For a minimal matching covered bipartite graph $G$ different from $K_2$, Lov\'{a}sz and Plummer \cite{LP77} proved that $\delta(G)=2$ and that every ear in any ear decomposition of $G$ contains at least one 2-line \footnote{Lov\'{a}sz and Plummer used the terminology ``minimal elementary bipartite graph'' in \cite{LP77}.}.
Consequently, they have the following.
\begin{thm}[\cite{LP77}]\label{thm:2-lines}
    Every minimal matching covered bipartite graph $G$ contains at least $[(|V(G)| + 15)/6]$ 2-lines.
\end{thm}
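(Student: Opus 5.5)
The plan is to combine the ear-decomposition structure of matching covered bipartite graphs with the fact quoted just before the statement: in a minimal matching covered bipartite graph $G\neq K_2$, every ear of every ear decomposition contains a 2-line. The proof then reduces to two counting steps. First, bound the number of ears from below in terms of $|V(G)|$. Second, check that distinct ears give distinct 2-lines. (For $G=K_2$ the bound $[(2+15)/6]=2$ fails, so throughout I assume $G\ne K_2$.)

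\textbf{Step 1: fix an ear decomposition.} Take a bipartite ear decomposition $G_0=K_2\subset G_1\subset\cdots\subset G_r=G$, where each $G_i$ is obtained from $G_{i-1}$ by adding an odd path (an ear) whose ends lie in $G_{i-1}$ and whose interior is new. Every matching covered bipartite graph has such a decomposition. Each ear contains at least one 2-line of $G$.

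\textbf{Step 2: distinct ears give distinct 2-lines.} A 2-line $uv$ must contain an internal vertex of some ear. Otherwise both ends are attachment vertices, and after the first ear any attachment vertex has degree at least $3$ in $G$. So each 2-line lies inside a single ear, and the 2-lines from different ears are distinct. Counting the first ear (the first even cycle) as well, this gives at least $r$ 2-lines, where $r$ is the number of ears.

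\textbf{Step 3: bound the number of ears.} Since $|E|-|V|+1=r$, I need a lower bound on the cyclomatic number. This is the main obstacle, because long ears make $r$ small relative to $|V|$. The key extra input is that minimality forbids ears that are too long. If an ear has many consecutive degree-2 vertices, I would argue as in Lov\'asz--Plummer that the ear decomposition can be re-chosen, or an edge can be shown removable, contradicting minimality. The expected output is that every ear has at most a bounded number of internal vertices, roughly six. With that bound, $|V|\le 2+6(r-1)+c$, which rearranges to $r\ge(|V|+15)/6$ after the constants are tuned; a degree-sum argument on cubic versus degree-2 vertices can absorb the additive constant.

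Alternatively, if that does not work cleanly, I would count directly. Let $n_2$ be the number of degree-2 vertices. In a minimal matching covered bipartite graph, the vertices of degree at least $3$ number at most about $|V|/2$, by the Lov\'asz--Plummer bound that such graphs have at most $2|V|-4$ edges together with the degree sum. Maximal paths of degree-2 vertices (threads) then contribute 2-lines: a thread with $k\ge2$ internal vertices contains $k-1$ 2-lines. Threads with only one internal vertex must be bounded by the ear argument.

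The hardest step is the last one: proving that ears, or threads, cannot be too short without creating a removable edge, and extracting exactly the constant $15/6$. I would first try the ear-counting route with the integer part handled by small cases. I would check sharpness on small minimal examples such as $C_4$, which gives $[19/6]=3$ 2-lines, at most 4, and is consistent.
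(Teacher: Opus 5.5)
Your Steps 1--2 follow the route the paper points to. Lov\'asz--Plummer show that every ear contains a 2-line, and ears are edge-disjoint, so $t\ge r$, where $t$ is the number of 2-lines and $r=|E(G)|-|V(G)|+1$ is the number of ears. The gap is in Step 3. Minimality does \emph{not} bound the length of ears, and $r\ge(|V(G)|+15)/6$ is false. The even cycle $C_{2k}$ is minimal with $r=1$. The theta graph made of three internally disjoint $u$--$v$ paths of odd lengths $\ell_1,\ell_2,\ell_3\ge 3$ is matching covered and minimal (deleting any edge creates a vertex of degree 1), and it has $r=2$ with arbitrarily many vertices. So no constant $c$ makes $|V|\le 2+6(r-1)+c$ true, and $t\ge r$ alone gives nothing linear in $|V|$.

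Long ears are a \emph{source} of 2-lines, and you are missing a second inequality that is strong exactly when $r$ is small. It comes from degree counting. Since $\sum_v (d(v)-2)=2(|E|-|V|)=2(r-1)$, at most $2(r-1)$ vertices have degree at least $3$. Your attempt to bound $n_3$ from $|E|\le 2|V|-4$ fails: the degree sum then only gives $n_3\le 2|V|-8$. If $G$ is not a cycle (for a cycle $t=|V|$ directly), the degree-2 vertices lie in the interiors of threads. There are exactly $|E|-n_2=n_3+r-1$ threads, and a thread with $j\ge 1$ interior vertices carries $j-1$ 2-lines. Hence $t\ge n_2-(n_3+r-1)=|V|-2n_3-r+1\ge |V|-5r+5$, and one-vertex threads need no separate argument. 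The denominator $6$ comes from balancing $t\ge r$ against $t\ge |V|-5r+5$, not from ears of length about six.

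Even with this repair, the balancing gives only $6t\ge |V|+5$, hence $t\ge\lceil(|V|+6)/6\rceil$ by parity. This is one short of $[(|V|+15)/6]$ when $|V|\equiv 0,4\pmod 6$; for $|V|=10$ it gives $3$, while the theorem claims $4$. You need one more unit of saving, for example $t\ge r+1$. That gives $6t\ge |V|+10$, and $\lceil(|V|+10)/6\rceil=[(|V|+15)/6]$ for even $|V|$. A natural way to get it is to start the ear decomposition at a 2-line $e_0$ and apply the ear lemma to each of the $r$ added paths, which are edge-disjoint from $e_0$. You would then have to confirm that the Lov\'asz--Plummer lemma holds in that form, i.e.\ for the first added path and not only for the first cycle; otherwise further structural input is needed. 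As written, the proposal has neither the second inequality nor this extra saving, so it does not prove the statement.
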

Moreover, Lov\'{a}sz and Plummer \cite{LP77} showed that the lower bound of Theorem \ref{thm:2-lines} is sharp for all $|V(G)| \ge 8$.  
Generally, He et al. \cite{HLX2025} proved that the minimum degree of a minimal matching covered graph is  either 2 or 3.  We focus on adjacent vertices of minimum degree in minimal matching covered graphs in this paper. The following are our main results.
\begin{thm}\label{thm:main-thm}
    Let $G$ be a minimal matching covered graph with at least four vertices and $\delta(G)\ge3$. Then $G$ contains at least two (nonadjacent) 3-lines \footnote{In \cite{LP77}, the terminology  ``3-line'' refers to an edge incident with two vertices of degree at least 3, but in this paper, we use this terminology to denote an edge incident with two vertices of degree exactly 3.}. Consequently, $G$ contains at least 4 cubic vertices.
\end{thm}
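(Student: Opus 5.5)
The plan is to argue by induction via the tight cut decomposition, reducing to bricks and braces. Throughout, $G$ is minimal matching covered with $|V(G)|\ge 4$ and $\delta(G)\ge 3$. The goal is the stronger statement announced in the abstract: every minimal matching covered graph on at least four vertices contains two nonadjacent edges, each a 2-line or a 3-line. Under $\delta(G)\ge 3$ neither of these edges can be a 2-line, so both are 3-lines. Since they are nonadjacent, they cover four distinct cubic vertices, which gives the ``consequently'' part at once.

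\textbf{Step 1 (bipartite case).} If $G$ is bipartite, Theorem~\ref{thm:2-lines} and the Lov\'asz--Plummer result give $\delta(G)=2$, so this case is vacuous for the theorem. For the stronger statement it suffices to note that the ear argument already yields at least two 2-lines once $|V(G)|\ge 4$. These 2-lines can be chosen nonadjacent: two adjacent 2-lines form a path whose inner vertex has degree 2, and in a minimal bipartite matching covered graph on at least 4 vertices one can pick 2-lines in different ears.

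\textbf{Step 2 (nonbipartite $G$ with a nontrivial tight cut).} Let $C=\partial(X)$ be a nontrivial tight cut, and form the two $C$-contractions $G_1=G/\overline{X}$ and $G_2=G/X$. I would show that each $G_i$ is again minimal matching covered. If an edge $e$ of $G_1$ were removable, then $e$ would be removable in $G$, because matching coveredness is preserved under splicing along a tight cut. The degrees of vertices other than the contraction vertex are unchanged.

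By induction, $G_1$ has two nonadjacent good edges. At most one of them is incident with the contraction vertex $\overline{x}$, so $G_1$ contributes at least one good edge lying entirely inside $X$. The same argument on $G_2$ gives a good edge inside $\overline{X}$. These two edges are nonadjacent in $G$, and their end degrees are the same in $G$ as in the $G_i$, so they are good in $G$. One subtlety is that $G_i$ might have only two vertices. This cannot happen, since the cut is nontrivial, so $|X|,|\overline{X}|\ge 3$ and each contraction has at least 4 vertices.

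\textbf{Step 3 (bricks).} The main obstacle is the case where $G$ is a minimal brick, that is, a brick with no removable edge. For $K_4$ and $\overline{C_6}$ every edge is a 3-line, so these can be checked directly. Otherwise, the Lov\'asz / Carvalho--Lucchesi--Murty theorem says that every brick other than $K_4$ and $\overline{C_6}$ has a removable edge. A minimal brick therefore cannot be anything else, and the brick case is finished.

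However, the paper's keyword ``wheel-like brick'' suggests this step is subtler. Step 2 needs more care when $G$ has no nontrivial tight cut but is not itself minimal in the appropriate sense. More importantly, in the induction the contractions may fail to inherit minimality, because removability in a contraction need not lift when the edge lies in $C$. If that lifting fails, I would instead pick a barrier cut or a 2-separation cut and use wheel-like bricks as base cases, proving directly that a wheel-like brick has two nonadjacent 3-lines among its spokes and rim edges. Proving the lifting lemma from Step 2 robustly, or replacing it in this way, is the step I expect to require the most work.
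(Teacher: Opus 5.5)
\textbf{There is a genuine gap in Step 2.} Your Step 2 rests on the claim that both $C$-contractions of a minimal matching covered graph are again minimal, and this is false. By Lemma~\ref{lem:re_also_re}, $G-e$ is matching covered if and only if $G_1-e$ and $G_2-e$ both are. For $e\notin C$ one of these is just $G_i$, so removability does lift. An edge $e\in C$, however, lifts only if it is removable in \emph{both} contractions.

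This is not a corner case. In the graph $G_n$ of the first remark, take the 2-separation cut $\partial(\{u_1,v_1,u_2\})$ and contract $\overline{X}$. The result is $K_4$ with the edge $u_2\overline{x}$ doubled (coming from $u_2u_3$ and $u_2v_3$). Each parallel copy is removable there, although $G_n$ is minimal. In any bicritical graph, Proposition~\ref{pro:degree-2-sep} forces such parallel edges at the contraction vertex of every 2-separation cut.

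So your induction hypothesis does not apply to the contractions, and Step 3 treats the wrong base case. The bricks that actually arise are not minimal: they have removable edges, all incident with the contraction vertex. You notice this in your last paragraph, but the suggested repair (show that wheel-like bricks have two nonadjacent 3-lines) is unproved and is not what is needed.

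\textbf{Two ingredients are missing; the paper supplies both.}

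First, you need a statement about bicritical graphs whose removable edges all meet one vertex $h$. Lemma~\ref{lem:WL-bicritical} says that then either $h$ is incident with at least three removable edges, or some 3-line avoids $h$. Its proof uses Lemma~\ref{lem:WL-brick} (every hub edge of a wheel-like brick on at least six vertices is removable) and Theorem~\ref{thm:sim-near-bi-brick}, which reduces the doubleton case to $K_4$, the prism and $R_8$.

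Second, since the first alternative yields no 3-line at all, the cut must be chosen so that the contraction vertex carries at most two removable edges. The paper gets this from barriers:
\begin{itemize}
\item Take a maximal nontrivial barrier $B$ with $\delta(H(G,B))\ge 3$. Lemma~\ref{lem:nonre-forest} (nonremovable edges of a bipartite matching covered graph with minimum degree at least 3 form a forest) gives, via Corollary~\ref{cor:nonre-at-most-2}, two vertices on the non-barrier side of $H(G,B)$ meeting at most two nonremovable edges.
\item These vertices must be contracted odd components $Q$, since otherwise $G$ would have a removable edge.
\item By Lemma~\ref{lem:re_also_re}, the vertex $\overline{q}$ of $G/\overline{V(Q)}$ then meets at most two removable edges.
\item Iterating with maximal barriers inside $Q$ reaches a bicritical graph, to which Lemma~\ref{lem:WL-bicritical} or Corollary~\ref{cor:bicritical-wi-re} applies.
\end{itemize}
When instead $\delta(H(G,B))=2$ for some $B$, the paper uses the barrier cut $\partial(\{\overline{q},k_1,k_2\})$ in $G/\overline{V(Q)}$. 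Its small contraction has only parallel, hence removable, cut edges. That is exactly what forces the other contraction to be minimal, so induction applies there.

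Your argument has no substitute for either ingredient.
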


\begin{remark} {\rm 
The lower bounds in  Theorem \ref{thm:main-thm} are sharp for graphs with at least six vertices. 
Let the graph $G_{n}$ $(n\ge3)$ be obtained from two disjoint paths $u_1u_2\ldots u_n$ and $v_1v_2\ldots v_n$ by adding the edge set $\{u_1v_1,u_nv_n\}\cup \{u_iv_{i+1}, v_iu_{i+1}:i=1,2,\dots, n-1\}$ (see Figure \ref{fig:sharp-exp} when $n=5$).
Note that $\{u_j,v_j\}$ is a 2-separation of $G_{n}$  (see its definition in Section 2), for each $2\le j\le n-1$.
 Then $G_n$ can be gotten by the splicings (see its definition in Section 2) of $n-1$ bricks whose underlying simple graphs are isomorphic to $K_4$ (the complete graph with 4 vertices).
So $G_{n}$ is matching covered (by Proposition \ref{thm:MC_IS_MC}).
Moreover, it can be checked that $u_2u_3$ is not allowed in $G_{n}-u_1v_1$ and  $u_{n-2}u_{n-1}$ is not allowed in $G_{n}-u_nv_n$; and for each $1\le i\le n-1$, $v_iv_{i+1}$ is not allowed in $G_{n}-u_iu_{i+1}$ and $u_iv_{i+1}$ is not allowed in $G_{n}-v_iu_{i+1}$ (the statements also hold when $u$ and $v$ are interchanged).  It means that every edge of $G_{n}$ is not removable, that is, $G_{n}$ is a minimal matching covered graph. Moreover, for every integer $n\ge3$, the graph $G_n$ has exactly two 3-lines: $u_1v_1$ and $u_nv_n$ (which are nonadjacent), and has exactly 4 cubic vertices: $u_1$, $v_1$, $u_n$ and $v_n$. 
}
    \begin{figure}[!h]
    \centering
    \includegraphics[totalheight=3cm]{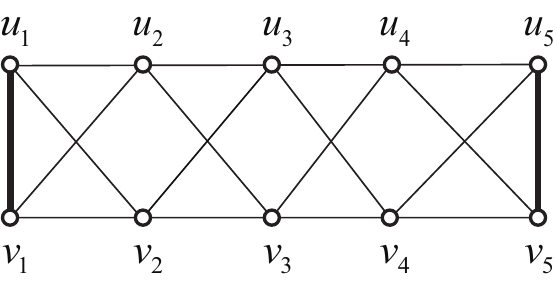}
    \caption{The graph $G_{5}$ (the bold edges are 3-lines).}
    \label{fig:sharp-exp}
\end{figure}
\end{remark}
\begin{thm}\label{thm:2-or-3-lines} 
        Every minimal matching covered graph with at least four vertices  contains two nonadjacent edges such that each of them is a 2-line or 3-line. 
\end{thm}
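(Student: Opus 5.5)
We prove Theorem \ref{thm:2-or-3-lines} by induction on $|V(G)|$, using the tight cut decomposition. The main tools are the fact that a tight cut contraction of a minimal matching covered graph is again minimal on the relevant edges, and the structure of minimal bricks. Let $G$ be minimal matching covered with $|V(G)|\ge 4$. Since $G$ contains no removable edges and, by He et al.\ \cite{HLX2025}, $\delta(G)\in\{2,3\}$, the small-degree vertices are where we look for the required lines.

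\textbf{Step 1 (base case: $G$ is a brick or brace).} Suppose $G$ has no nontrivial tight cut.
\begin{itemize}
\item If $G$ is bipartite (a brace), then Theorem \ref{thm:2-lines} gives at least $[(|V(G)|+15)/6]\ge 3$ 2-lines when $|V(G)|\ge 4$. Two of them must be nonadjacent. Otherwise all of them pass through a common vertex, or form a triangle, which is impossible in a bipartite graph. In the star case I would still need to rule out a single vertex carrying all the 2-lines, using the ear structure of \cite{LP77}: each ear contributes its own 2-line.
\item If $G$ is a brick, it is a minimal brick. I would invoke (or reprove) the known structure of minimal bricks: they have many cubic vertices and, for wheel-like bricks, the rim contains 3-lines. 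A cleaner route is to use the fact that in a brick $G\neq K_4,\overline{C_6}$ every edge $e$ with an end of degree $\ge 4$ tends to be removable. So in a minimal brick, an edge non-removable because of a barrier forces a pair of cubic endpoints. From this we extract two disjoint 3-lines.
\end{itemize}

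\textbf{Step 2 (inductive step: a nontrivial tight cut exists).} Let $C=\partial(X)$ be a nontrivial tight cut, chosen with $|X|$ minimal, so that $G/\overline{X}$ is a brick or brace. Both contractions $G_1=G/X$ and $G_2=G/\overline X$ are matching covered. An edge of $G_i$ not in $C$ is removable in $G_i$ only if it is removable in $G$. Hence $G_i$ is "minimal away from $C$", and we apply induction to each $G_i$ in this slightly generalized form. The strengthened hypothesis should be: any minimal matching covered graph contains two nonadjacent 2/3-lines, and moreover for any prescribed vertex $x$ we can choose them avoiding $x$ unless the graph is small. This makes the lines in $G_1$ and $G_2$ avoid the contraction vertices. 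Degrees of vertices other than the contraction vertex are unchanged, so each $G_i$ supplies a 2- or 3-line inside $X$ (respectively $\overline X$) not meeting the shore vertex. These two lines lie on different sides of $C$ and are not in $C$, hence are nonadjacent in $G$.

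\textbf{Main obstacle.} The hard part is the strengthened statement needed for the induction: finding lines avoiding a prescribed contraction vertex, together with the case where removability of an edge in $G_i$ does not lift. Edges of $C$ may be removable in $G_i$ but not in $G$, and removable doubletons complicate minimality. I would handle this by working with the class of matching covered graphs in which every removable edge is incident to a fixed vertex $x$. I would then prove, via the brick case (Step 1) plus an analysis of wheel-like bricks in which all removable edges share a vertex, that such a graph has a 2- or 3-line avoiding $x$ and two nonadjacent ones overall.
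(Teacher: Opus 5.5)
\textbf{There is a genuine gap in Step 2.} The reduction itself is sound. An edge of $G/\overline{X}$ not in $C$ is removable there iff it is removable in $G$ (Lemma \ref{lem:re_also_re}), and lines lying strictly inside $X$ and strictly inside $\overline{X}$ are automatically nonadjacent. But everything rests on the lemma you defer to the ``main obstacle'', and that lemma is false.

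Let $K$ be $K_4$ on $\{x,w_1,w_2,w_3\}$ with $xw_1$ and $xw_2$ doubled. Then $K$ is a brick, and its removable edges are exactly the four edges joining $x$ to $w_1$ and $w_2$. Its only vertex of degree $3$ is $w_3$, so it has no 2-line or 3-line at all.

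This is not a small exception you can discard: it arises as the brick side of a minimal-shore tight cut of a minimal matching covered graph. Take a barrier $B=\{b_1,\dots,b_5\}$ and a triangle $w_1w_2w_3$ with $w_1\sim b_1,b_2$, $w_2\sim b_3,b_4$, $w_3\sim b_5$. For $j=1,\dots,4$ add a triangle $p_jr_js_j$ with $p_j,r_j\sim b_j$ and $s_j\sim b_5$. This $G$ is a splicing of the matching covered bipartite graph $H(G,B)$ with $K$ and four copies of $K_4$, so it is matching covered, and $\delta(G)=3$. By Lemma \ref{lem:re_also_re} it has no removable edge:
the edges from $w_1,w_2$ to $B$ are nonremovable in $H(G,B)$, because each $b_j$ with $j\le 4$ has only two distinct neighbours there; every other edge is nonremovable in the relevant $K_4$-type contraction. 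Yet $X=\{w_1,w_2,w_3\}$ is a shore of the smallest possible size, $G/\overline{X}\cong K$, and $X$ contains no 3-line of $G$. So with your choice rule, Step 2 returns nothing on one side. Your first formulation (two nonadjacent lines both avoiding a prescribed vertex) already fails for $G_n$ with $x=u_1$.

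\textbf{The missing idea} is control over how many removable edges the contraction vertex carries on the bicritical side. The tool actually available, Lemma \ref{lem:WL-bicritical}, gives only ``$h$ carries at least three removable edges, or some 3-line avoids $h$''. The graph $K$ is exactly the first alternative. The paper therefore never contracts along an arbitrary small cut; its route is as follows.
\begin{enumerate}
\item It first disposes of the case where some nontrivial barrier $B$ has $\delta(H(G,B))=2$ by a separate barrier-cut induction (Case 1).
\item Otherwise $\delta(H(G,B))\ge 3$ for every nontrivial barrier, and it takes a maximal one, $B$. The nonremovable edges of $H(G,B)$ form a forest (Lemma \ref{lem:nonre-forest}), so by Corollary \ref{cor:nonre-at-most-2} some contracted component $q$ lies in $U_{H(G,B)}$. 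By minimality of $G$, $\overline{q}$ then carries at most two removable edges in $G/\overline{V(Q)}$.
\item It iterates inside $Q$, using maximal barriers that avoid the previous contraction vertex. Only one special vertex ever occurs, which also repairs the fact that your class ``all removable edges at $x$'' is not closed under further contraction. This ends in a bicritical graph where Lemma \ref{lem:WL-bicritical} or Corollary \ref{cor:bicritical-wi-re} yields the 3-line.
\end{enumerate}
In the example, with $B=\{b_1,\dots,b_5\}$ the contraction of $w_1w_2w_3$ has four nonremovable edges in $H(G,B)$, so it is not in $U_{H(G,B)}$, and the rule picks some $p_jr_js_j$ instead.

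For the statement itself, the case $\delta(G)=2$ is handled separately in the paper with barrier cuts around degree-2 vertices, using Corollary \ref{cor:M-C-only-1-RE}. Your brick base case should simply cite Theorem \ref{thm:re_in_brick}: a brick with no removable edge is $K_4$ or the prism.
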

\begin{remark}\label{remark:without-2-lines}
    {\rm Not every minimal matching covered nonbipartite graph with  minimum degree 2 has a 2-line. An example is given as follows.
    Let the graph $H_n$ ($n\ge3$) be obtained by the splicing of $G_n$ at $u_2$ with $C_4^+$ at $w$, where $C_4^+$ is the graph shown in  Figure \ref{fig:couter-exp}. It can be checked by Proposition \ref{thm:MC_IS_MC} that $H_n$ is matching covered and by Lemma \ref{lem:re_also_re} that  $H_n$ is free of removable edges (as $\partial(\{a,a',a''\})$ is a tight cut of $H_n$). For every integer $n\ge3$, the graph $H_n$ is a minimal matching covered graph with only one vertex of degree 2 (see Figure \ref{fig:couter-exp} when $n=5$)}.  
\begin{figure}[!h]
\centering
\begin{minipage}[b]{0.28\textwidth}
\centering
\includegraphics[totalheight=3cm]{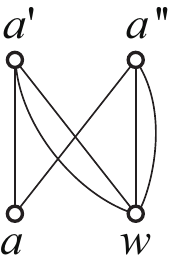} 
\end{minipage}
\begin{minipage}[b]{0.68\textwidth}
\centering
\includegraphics[totalheight=4cm]{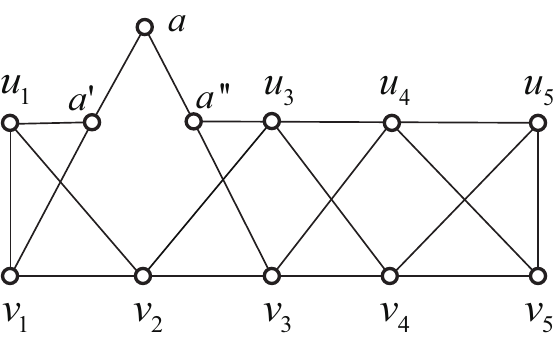} 
\end{minipage}
\caption{The graphs $C_4^+$ (left), and   $H_5$ (right).}
\label{fig:couter-exp}
\end{figure} 
\end{remark}

In Section 2, we present some notation and preliminary results.
In Section 3,  we will consider the removable edges in matching covered  bipartite graphs.
In Section 4, some properties of removable edges in bicritical graphs will be presented.
In Section 5, we give proofs of Theorems \ref{thm:main-thm} and \ref{thm:2-or-3-lines}. 
 In Section 6, we present a conjecture that states that each minimal brace (see its definition in Sections 4 and 6)  with at least 6 vertices has a 3-line.

\section{Preliminaries}

We begin with some notation. 
 For nonempty sets $X$ and $Y$, the notation $X \subseteq Y$ means that $X$ is a subset of $Y$, while $X \subset Y$ means that $X \subseteq Y$ with $X \neq Y$. Likewise, $H \subset G$ denotes that $H$ is a subgraph of the graph $G$ and $H \neq G$.
For a vertex $u\in V(G)$, denoted by $N_G(u)$ or simply $N(u)$, the set of vertices in $G$ adjacent to $u$. 
 For $\emptyset\neq X\subset V(G)$, let $N_G(X)$ denote the set of all vertices in $V(G)\setminus X$ which are adjacent to at least one vertex in $X$ and let $G[X]$ be the subgraph of $G$ induced by $X$. 
For nonempty proper subsets $X,Y\subset V(G)$, by $E_G[X,Y]$ we mean the set of edges of $G$ with one end vertex in $X$ and the other end vertex in $Y$. Let $\partial_G(X) := E_G[X,\overline{X}]$ be an {\em edge cut} of $G$, where $\overline{X} = V(G) \backslash X$. If $X = \{u\}$, then we denote $\partial_G(\{u\})$, for brevity,  by $\partial_G(u)$.  (If $G$ is understood, the subscript $G$ will be omitted.) 
An edge cut $\partial(X)$ is {\em trivial} if $|X| = 1$ or $|\overline{X}| = 1$.
Let $\partial(X)$ be an edge cut of $G$.
Denoted by $G/(X\to x)$, or simply $G/X$, the graph obtained from $G$ by contracting $X$ to a singleton $x$ and removing any resulting loops. The graphs $G/X$ and $G/\overline{X}$ are {\em $\partial(X)$-contractions} of $G$.
 \begin{lem}[\cite{Lovasz87}]\label{lem:C-contractions-MC}
   For any nontrivial tight cut $\partial(X)$ of a matching covered graph $G$, both $\partial(X)$-contractions of $G$ are matching covered.
\end{lem}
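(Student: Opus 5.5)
We prove Lemma \ref{lem:C-contractions-MC}, that both $\partial(X)$-contractions of a matching covered $G$ with a nontrivial tight cut $\partial(X)$ are matching covered. By symmetry it suffices to treat $G_1 := G/(\overline{X}\to \overline{x})$. We must show that $G_1$ is connected, nontrivial, and that every edge of $G_1$ lies in a perfect matching of $G_1$. Nontriviality is immediate because $|X|\ge 2$, so $G_1$ has at least three vertices. Connectivity holds since $G$ is connected and contraction preserves connectivity.

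The key tool is the restriction of perfect matchings across a tight cut. Let $M$ be any perfect matching of $G$. Because $\partial(X)$ is tight, $|M\cap \partial(X)|=1$. Say $M\cap\partial(X)=\{f\}$ with $f=uv$, $u\in X$, $v\in\overline{X}$.

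1. In $G_1$, the edge $f$ becomes an edge $u\overline{x}$.
2. The set $M_1 := (M\cap E(G[X]))\cup\{f\}$ covers every vertex of $X$ exactly once, since the vertices of $X$ other than $u$ are matched inside $G[X]$ by $M$.
3. $M_1$ covers $\overline{x}$ exactly once, via $f$.

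Hence $M_1$ is a perfect matching of $G_1$. We call it the restriction of $M$ to $G_1$.

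Now take any edge $e$ of $G_1$. It is either an edge of $G[X]$ or an edge of $\partial(X)$, since edges inside $\overline{X}$ became loops and were removed. In both cases $e$ corresponds to an edge of $G$, which we also call $e$. Because $G$ is matching covered, there is a perfect matching $M$ of $G$ with $e\in M$. Its restriction $M_1$ contains $e$:

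- if $e\in E(G[X])$, then $e\in M\cap E(G[X])\subseteq M_1$;
- if $e\in\partial(X)$, then $e$ is the unique edge $f$ of $M$ in the cut, so again $e\in M_1$.

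Thus every edge of $G_1$ is allowed, and $G_1$ is matching covered. The same argument with $X$ and $\overline{X}$ interchanged handles $G/X$.

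There is no real obstacle here. The only point needing care is the bookkeeping in the contraction: parallel edges created at $\overline{x}$ are kept, loops are removed, and each cut edge keeps its identity as an edge of $G_1$. This is the one step I would state explicitly, since it is what makes $e\in M_1$ meaningful for edges of $\partial(X)$. Tightness is used exactly once, to ensure $\overline{x}$ is covered by exactly one edge of $M_1$.
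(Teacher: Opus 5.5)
Your proof is correct: restricting a perfect matching $M$ of $G$ that contains a given edge $e$ to $G/(\overline{X}\to\overline{x})$ yields a perfect matching containing $e$, because tightness gives $|M\cap\partial(X)|=1$; you also handle connectivity and nontriviality properly. The paper does not prove this lemma but cites it from Lov\'asz \cite{Lovasz87}, and your argument is the standard proof of that result.
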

 A component with odd (even) number of vertices is called an {\em odd (even) component}. A component is {\em trivial} if it contains exactly one vertex. 
We denote by $o(G)$ the number of odd components of the graph $G$. 
A vertex subset $B$ of a graph $G$ that has a perfect matching is a {\em barrier} if $o(G-B)=|B|$. A barrier is {\em trivial} if it contains at most one vertex. 
 Tutte  proved the following classical theorem in 1947.
\begin{thm}[\cite{Tutte47}]\label{thm:Tutte}
    A graph $G$ has a perfect matching if and only if $o(G-S)\le|S|$, for every $S\subseteq V(G)$.
\end{thm}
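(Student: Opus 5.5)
This is Tutte's theorem (Theorem \ref{thm:Tutte}), and I would prove it directly. Necessity is immediate. If $M$ is a perfect matching of $G$ and $S\subseteq V(G)$, then every odd component $C$ of $G-S$ has a vertex that $M$ matches to a vertex outside $C$. Since $C$ is a component of $G-S$, that partner must lie in $S$. Distinct odd components use distinct partners in $S$ because $M$ is a matching, so $o(G-S)\le |S|$.

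For sufficiency I would follow Lovász's maximal counterexample argument. Let $G$ satisfy the condition but have no perfect matching. Taking $S=\emptyset$ gives $o(G)=0$, so $|V(G)|$ is even. Adding an edge never increases $o(G-S)$ for any $S$: it either joins two components of $G-S$ (two odds become one even, or an odd and an even become one odd) or lies inside a component or meets $S$. So we may add edges between nonadjacent vertices as long as no perfect matching appears. This yields an edge-maximal graph $G^*$ with no perfect matching that still satisfies the condition. Multiple edges play no role, so we may work with the underlying simple graph. Let $U$ be the set of vertices of $G^*$ adjacent to all other vertices.

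The key step is to show that every component of $G^*-U$ is complete. Suppose not. Then some component contains vertices $x,z$ with $x\not\sim z$ and a common neighbour $y$. Since $y\notin U$, there is a vertex $w$ with $y\not\sim w$. By maximality, $G^*+xz$ has a perfect matching $M_1$ containing $xz$, and $G^*+yw$ has a perfect matching $M_2$ containing $yw$.

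Consider $M_1\triangle M_2$, a disjoint union of even cycles alternating between the two matchings. Let $C$ be the cycle containing $xz$.
\begin{itemize}
\item If $yw\notin C$, then $(M_1\setminus E(C))\cup(M_2\cap E(C))$ is a perfect matching of $G^*$, a contradiction.
\item If $yw\in C$, walk along $C$ from $y$ in the direction away from $w$ until first reaching one of $x$ or $z$, say $x$. The segment walked is a path $P$ from $y$ to $x$; neither $xz$ nor $yw$ lies on $P$, and the end edges of $P$ belong to $M_1$ at $y$ and to $M_2$ at $x$ (the two ends of $xz$ must be entered by $M_2$-edges). Then $P+xy$ closes an even $M_2$-alternating cycle $C'$ using only edges of $G^*$, with $yw$ excluded. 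Swapping $M_2$ along $C'$ gives a perfect matching of $G^*$, a contradiction.
\end{itemize}
Getting this parity bookkeeping right is the main technical point.

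With the structure established, finish as follows. By hypothesis $o(G^*-U)\le |U|$. In each odd component of $G^*-U$, match one vertex to a distinct vertex of $U$ (all adjacencies to $U$ exist) and pair the remaining vertices inside the component, which is complete. Pair up each even component internally. The leftover vertices of $U$ form a clique, and their number is even because $|V(G^*)|$ is even, so they pair up as well. This gives a perfect matching of $G^*$, contradicting its choice. Hence $G$ has a perfect matching.
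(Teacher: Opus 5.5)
The paper gives no proof of this statement. It is quoted from Tutte's 1947 paper and used only as a tool for Corollaries~\ref{cor:M-C-without-even-components} and~\ref{cor:2-vtx-cut-of-MC}, so your argument has to stand on its own. Your route is the standard Lov\'asz edge-maximal counterexample, with $U$ the set of universal vertices. Most of it is correct: the necessity count, the fact that adding an edge never increases $o(G-S)$, the case $yw\notin C$, and the final assembly of a perfect matching of $G^*$.

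\textbf{The gap is in the case $yw\in C$, exactly at the parity point you flagged.}

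\begin{itemize}
\item Your path $P$ from $y$ to $x$ starts with an $M_1$-edge, ends with an $M_2$-edge, and alternates. So it has an \emph{even} number of edges, and $P+xy$ is an \emph{odd} cycle, not an even one.
\item Independently of parity, $P+xy$ is not $M_2$-alternating at $y$. Both of its edges at $y$ (the first edge of $P$, in $M_1\setminus M_2$, and $xy$) lie outside $M_2$. The $M_2$-edge at $y$ is $yw$, which you deliberately excluded.
\item In fact, any cycle through $y$ that avoids $yw$ leaves $yw$ in the swapped matching. So no swap of $M_2$ along such a cycle can give a perfect matching of $G^*$, and the contradiction is not reached.
\end{itemize}

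The repair is short; either of the following works.

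\begin{itemize}
\item \emph{Start the walk at $w$ instead.} Leave $w$ along its $M_1$-edge (away from $y$) and stop at the first of $x,z$ reached, say $x$. This path $P'$ has even length and avoids $y$, $xz$ and $yw$. Then $C'=P'+xy+yw$ is an even $M_2$-alternating cycle whose only edge outside $E(G^*)$ is $yw\in M_2$. Hence $M_2\triangle E(C')$ is a perfect matching of $G^*$.
\item \emph{Keep your $P$ but close it through the other end of $xz$ and swap $M_1$.} Note that $z\notin V(P)$. Also, the $M_1$-partner of $y$ is neither $x$ nor $z$, since $x$ and $z$ are $M_1$-matched to each other. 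So $P+xz+zy$ is a cycle with $|E(P)|+2$ edges and is $M_1$-alternating. Its only edge outside $E(G^*)$ is $xz\in M_1$, and its $M_2$-edges avoid $yw$. Hence swapping $M_1$ along it gives a perfect matching of $G^*$.
\end{itemize}

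With either fix the proof is complete.
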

Using Theorem \ref{thm:Tutte}, we have the following properties about matching covered graphs.
\begin{cor}[\cite{Lovasz87}]\label{cor:M-C-without-even-components}
  Let $G$ be a matching covered graph. Every nonempty barrier $B$ is independent, and $G-B$ has no even components. 
\end{cor}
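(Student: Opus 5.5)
Fix a nonempty barrier $B$ of the matching covered graph $G$, and let $M$ be any perfect matching of $G$. Since $o(G-B)=|B|$ and every odd component of $G-B$ must send at least one $M$-edge to $B$, each odd component of $G-B$ is matched into $B$ by exactly one edge of $M$, and every vertex of $B$ is matched to a vertex in some odd component. Consequently $M$ contains no edge with both ends in $B$ and no edge joining an even component of $G-B$ to $B$. I will derive both assertions of Corollary \ref{cor:M-C-without-even-components} from this observation, using the hypothesis that every edge is allowed.

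\emph{Independence.} Suppose $e=uv$ is an edge with $u,v\in B$. Because $G$ is matching covered, some perfect matching $M$ contains $e$. This contradicts the observation above, since under $M$ both $u$ and $v$ would have to be matched into odd components of $G-B$. Hence $B$ is independent.

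\emph{No even components.} Suppose $G-B$ has an even component $D$. As $G$ is connected and $B\neq\emptyset$, either $B\neq\emptyset$ and $D$ is joined to $B$ by some edge $f$, or $D=G$; the latter is impossible because $B$ is nonempty. Since $f$ is allowed, some perfect matching $M$ contains $f$, which again contradicts the observation. Therefore $G-B$ has no even components.

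The only step that needs care is the counting argument behind the observation. It goes as follows. Under a perfect matching, an odd component $C$ of $G-B$ cannot be perfectly matched within itself, so at least one vertex of $C$ is matched into $B$. These $M$-neighbours in $B$ are distinct for distinct odd components, since $M$ is a matching. With $|B|$ odd components and $|B|$ vertices in $B$, this uses up every vertex of $B$, and each component receives exactly one such edge. Tutte's theorem (Theorem \ref{thm:Tutte}) is not needed beyond this pigeonhole count.
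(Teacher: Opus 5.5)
Your proof is correct and is essentially the argument the paper relies on. The paper gives no proof beyond citing Lov\'asz and saying it follows from Tutte's theorem, and only the easy necessity direction of that theorem is needed. That direction is exactly your pigeonhole count: every perfect matching pairs each vertex of $B$ with a distinct odd component of $G-B$, so no edge inside $B$, and no edge between $B$ and an even component, is allowed.

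The sentence producing the edge $f$ is awkwardly phrased, but the logic is sound: $B\neq\emptyset$ forces $V(D)\neq V(G)$, and connectivity of $G$ then gives an edge from $D$ into $B$.
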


A graph $G$ with four or more vertices is {\em bicritical} if for any two distinct vertices $u$ and $v$ in $G$, $G-\{u, v\}$ has a perfect matching. Obviously, every bicritical graph is matching covered.
\begin{pro}[\cite{LP86}]\label{pro:mc_is_Bi}
    A matching covered graph $G$ different  from $K_2$ is bicritical if and only if every barrier of $G$ is trivial.
\end{pro}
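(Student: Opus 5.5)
Both directions will use Theorem \ref{thm:Tutte} and Corollary \ref{cor:M-C-without-even-components}, together with the observation that a bicritical graph is matching covered (noted just before the statement).

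\emph{Bicritical implies every barrier is trivial.} Let $G$ be bicritical and suppose $B$ is a barrier with $|B|\ge 2$. Choose distinct $u,v\in B$. Since $G$ is bicritical, $G-\{u,v\}$ has a perfect matching. Now set $S=B\setminus\{u,v\}$ in $G-\{u,v\}$. Then
\[
(G-\{u,v\})-S=G-B ,
\]
so $o\big((G-\{u,v\})-S\big)=o(G-B)=|B|=|S|+2>|S|$. By Theorem \ref{thm:Tutte}, $G-\{u,v\}$ has no perfect matching, a contradiction. Hence every barrier has at most one vertex.

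\emph{Every barrier trivial implies bicritical.} Let $G\neq K_2$ be matching covered with every barrier trivial. First, $|V(G)|\ge 4$: the graph is connected, has a perfect matching, and is not $K_2$. If it had exactly two vertices, it would be a multi-edge $K_2$, whose vertex set is a barrier of size $2$, which is excluded. The hypothesis $G\ne K_2$ is meant to rule this case out.

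Fix distinct $u,v$ and suppose $G-\{u,v\}$ has no perfect matching. By Theorem \ref{thm:Tutte} there is $S\subseteq V(G)\setminus\{u,v\}$ with $o(G-\{u,v\}-S)>|S|$. Let $B=S\cup\{u,v\}$. Since $|V(G)|$ is even, parity gives $o(G-B)\ge |S|+2=|B|$. Since $G$ has a perfect matching, $o(G-B)\le |B|$. Therefore $o(G-B)=|B|\ge 2$, so $B$ is a nontrivial barrier, contradicting the hypothesis. Hence $G-\{u,v\}$ has a perfect matching for all distinct $u,v$, and $G$ is bicritical.

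The parity step in the second direction is standard, using that $|V(G)-B|$ and $|B|$ have the same parity. The only real subtlety is the degenerate two-vertex case, which is why the hypothesis $G\ne K_2$ appears. Neither direction actually needs Corollary \ref{cor:M-C-without-even-components}, and I expect no serious obstacle.
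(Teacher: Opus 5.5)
Your two main arguments are correct and are the standard Tutte-based proof; the paper itself gives no proof and only cites \cite{LP86}. For the ``only if'' direction, you delete two vertices of a barrier of size at least two and apply Tutte's theorem with $S=B\setminus\{u,v\}$. For the ``if'' direction, you enlarge a Tutte set $S$ of $G-\{u,v\}$ to $B=S\cup\{u,v\}$. Parity then gives $o(G-B)\ge |B|$, and Tutte's theorem applied to $G$ gives equality. Both directions are fine once $|V(G)|\ge 4$.

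Your handling of the two-vertex case, however, is wrong. Suppose $V(G)=\{a,b\}$ with parallel edges. Then $G-\{a,b\}$ is the empty graph, so $o(G-\{a,b\})=0\neq 2$, and $\{a,b\}$ is \emph{not} a barrier. The only barriers of $G$ are $\emptyset$, $\{a\}$ and $\{b\}$, all trivial. Such a graph is matching covered and has only trivial barriers, yet it is not bicritical, because the paper's definition requires at least four vertices. So if $K_2$ means the simple graph, this is a genuine counterexample to the literal ``if'' direction. You cannot derive $|V(G)|\ge 4$ from the barrier hypothesis. Instead, read the assumption $G\neq K_2$ as excluding every two-vertex graph, i.e.\ $K_2$ up to multiple edges, which for matching covered graphs is equivalent to $|V(G)|\ge 4$. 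Remove the sentence claiming $V(G)$ is a barrier and state this reading explicitly; the rest of your proof then goes through unchanged.
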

A vertex subset $S$ of a matching covered graph $G$ is a {\em 2-separation} if $|S|=2$, $G-S$ is disconnected and each of the components of $G-S$ is even.
 The following corollary can be gotten directly by Theorem \ref{thm:Tutte} and Proposition \ref{pro:mc_is_Bi}.
\begin{cor}\label{cor:2-vtx-cut-of-MC}
    Let $G$ be a matching covered graph different from $K_2$ and let $u,v\in V(G)$. If $G-\{u,v\}$ is disconnected, then $\{u,v\}$ is either a barrier of $G$ or a 2-separation of $G$;  moreover, if $G$ is bicritical, then $\{u,v\}$ is a 2-separation of $G$.
\end{cor}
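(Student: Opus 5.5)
This statement is Corollary~\ref{cor:2-vtx-cut-of-MC}. The plan is to apply Theorem~\ref{thm:Tutte} with $S=\{u,v\}$, use Corollary~\ref{cor:M-C-without-even-components} to rule out even components when $S$ is a barrier, and use Proposition~\ref{pro:mc_is_Bi} for the bicritical part.

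\textbf{Step 1: $o(G-\{u,v\})\in\{0,2\}$.} Since $G$ is matching covered, it has a perfect matching. Hence $|V(G)|$ is even, and Theorem~\ref{thm:Tutte} gives $o(G-\{u,v\})\le 2$. The graph $G-\{u,v\}$ has an even number of vertices, so the number of its odd components is even. Therefore $o(G-\{u,v\})$ is $0$ or $2$.

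\textbf{Step 2: the two cases.} If $o(G-\{u,v\})=2$, then $\{u,v\}$ is a barrier by definition, and we are done. If $o(G-\{u,v\})=0$, then every component of $G-\{u,v\}$ is even. Since $G-\{u,v\}$ is disconnected by hypothesis, it has at least two components. With $|\{u,v\}|=2$, this is exactly the definition of a 2-separation. I should check that $u\neq v$ is implicit; if $u=v$, then $G-u$ would be disconnected with $|S|=1$. That situation does not arise here, since $\{u,v\}$ is meant to be a two-element set.

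\textbf{Step 3: the bicritical case.} Suppose $G$ is bicritical. By Proposition~\ref{pro:mc_is_Bi}, every barrier of $G$ is trivial, that is, it has at most one vertex. So the two-element set $\{u,v\}$ cannot be a barrier, and by Step~2 it must be a 2-separation.

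Alternatively, one can argue directly: $G-\{u,v\}$ has a perfect matching, so it has no odd components. Neither route has any real obstacle. The only care needed is the parity step that excludes $o(G-\{u,v\})=1$.
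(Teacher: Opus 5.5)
Your proof is correct and matches the paper's intended argument. The paper says only that the corollary follows directly from Tutte's theorem (applied to $S=\{u,v\}$, with parity ruling out $o(G-\{u,v\})=1$) and from Proposition~\ref{pro:mc_is_Bi} for the bicritical case, and that is exactly what you do; your plan's appeal to Corollary~\ref{cor:M-C-without-even-components} is unnecessary, since the statement makes no claim about even components when $\{u,v\}$ is a barrier.
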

 A graph $G$ with at least three vertices is {\em factor-critical} if for any $v\in V(G)$, the graph $G-v$ has a perfect matching. Obviously, every bipartite graph is not  factor-critical.  The following lemma is easy to verify by the definition (e.g., see Exercise 3.3.18 (b) of \cite{LP86} or Theorem 3.2 of \cite{Lucchesi2024}).
\begin{lem}\label{lem:max-barrier-critical}
    Let $B$ be a maximal barrier in a graph $G$ that has a perfect matching. Then each component of $G-B$ is a factor-critical graph.
\end{lem}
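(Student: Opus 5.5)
The statement to prove is Lemma \ref{lem:max-barrier-critical}: if $G$ has a perfect matching and $B$ is a maximal barrier, then every component of $G-B$ is factor-critical. The plan is the classical argument: first show that $G-B$ has no even components, then show that no odd component $H$ can fail to be factor-critical, since otherwise $B$ could be enlarged to a larger barrier.

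\emph{Step 1.} Suppose $G-B$ has an even component $D$. Pick any $v\in V(D)$ and set $B'=B\cup\{v\}$. Then $D-v$ has odd order, so it has at least one odd component. All odd components of $G-B$ are still components of $G-B'$. Hence $o(G-B')\ge |B|+1=|B'|$. By Theorem \ref{thm:Tutte}, $o(G-B')\le |B'|$, so equality holds and $B'$ is a barrier. This contradicts the maximality of $B$. So every component of $G-B$ is odd.

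\emph{Step 2.} Let $H$ be an odd component of $G-B$ that is not factor-critical. If $H$ is a single vertex it is trivially (vacuously) factor-critical, so take $|V(H)|\ge 3$. Choose $v\in V(H)$ such that $H-v$ has no perfect matching. By Theorem \ref{thm:Tutte} applied to $H-v$, there is $S\subseteq V(H)\setminus\{v\}$ with $o(H-v-S)>|S|$. Since $|V(H-v)|$ is even, parity gives $o(H-v-S)\equiv |S| \pmod 2$, and therefore $o(H-v-S)\ge |S|+2$.

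\emph{Step 3.} Set $B''=B\cup\{v\}\cup S$. The odd components of $G-B''$ include the other $|B|-1$ odd components of $G-B$ together with the odd components of $H-v-S$. This gives
\[
o(G-B'')\ge |B|-1+|S|+2=|B''|.
\]
By Theorem \ref{thm:Tutte} equality holds, so $B''\supset B$ is a barrier. This contradicts maximality, so every component of $G-B$ is factor-critical.

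The main obstacle is the parity bookkeeping in Step 2, namely upgrading $o(H-v-S)>|S|$ to $o(H-v-S)\ge|S|+2$. One also has to confirm in Step 3 that components of $H-v-S$ are genuinely components of $G-B''$; this holds because $H$ is a component of $G-B$, so $H$ has no edges to other components.
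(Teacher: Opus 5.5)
Your proof is correct and is the standard argument the paper relies on; the paper gives no proof of its own, only calling the lemma easy and citing Exercise 3.3.18(b) of \cite{LP86} and Theorem 3.2 of \cite{Lucchesi2024}. You exclude even components by adding one vertex to $B$, then enlarge $B$ by $\{v\}\cup S$, where $S$ is a Tutte set of $H-v$, using the parity upgrade $o(H-v-S)\ge |S|+2$.

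One small correction: $K_1$ is factor-critical because $K_1-v$ has the empty perfect matching, not vacuously. Also note that the paper's own definition of factor-critical requires at least three vertices, so the lemma has to be read as you read it, with trivial components counted as factor-critical.
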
 
An edge cut $\partial(X)$ is {\em tight} of a matching covered graph $G$ if $|\partial(X)\cap M|=1$ for every perfect matching $M$ of $G$. Obviously, a trivial edge cut is a tight cut.
Two special types of tight cuts in matching covered graphs, called barrier cuts and 2-separation cuts, are defined as follows.

An edge cut $C$ of a matching covered graph $G$ is a {\em barrier cut} if there exists a barrier $B$ of $G$ and an odd component $Q$ of $G-B$ such that $C=\partial(V(Q))$.
Let $\{u,v\}$ be a 2-separation of a matching covered graph $G$, and let us divide the components of $G-\{u,v\}$ into two nonempty subgraphs and denote one of them by $G_1$. The cuts $\partial(V(G_1)\cup \{u\})$ and $\partial(V(G_1)\cup \{v\})$ are both {\em 2-separation cuts} of $G$ associated with $\{u,v\}$, which are nontrivial tight cuts. 

Denote by $G[A, B]$ the bipartite graph with bipartition $(A,B)$. The following is a Hall-type characterization of matching covered bipartite graphs. 
\begin{lem}[\cite{Lucchesi2024}]\label{lem:bipartite-M-C-Hall}
    Let $G[A,B]$ be a bipartite graph with at least 4 vertices such that $|A| = |B|$. The graph  $G[A,B]$ is matching covered if and only if $|N(S)| \ge |S| + 1$, for all subsets $S$ of $A$ such that $1 \le |S| \le |A|-1$.
\end{lem}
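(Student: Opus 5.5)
The statement just above is Lemma~\ref{lem:bipartite-M-C-Hall}: a bipartite graph $G[A,B]$ with $|A|=|B|\ge 2$ is matching covered if and only if $|N(S)|\ge |S|+1$ for every $S\subseteq A$ with $1\le |S|\le |A|-1$. I will prove each direction by reducing to Hall's theorem. I use the standard fact that $G$ has a perfect matching if and only if $|N(S)|\ge|S|$ for all $S\subseteq A$, together with the definition of matching covered (connected, every edge allowed).

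\emph{Necessity.} Suppose $G$ is matching covered and some $S\subseteq A$ with $1\le|S|\le|A|-1$ has $|N(S)|\le|S|$. Since $G$ has a perfect matching, Hall's condition gives $|N(S)|=|S|$. Every perfect matching $M$ matches $S$ into $N(S)$, so it saturates $N(S)$ using edges from $S$ only. Hence no edge between $A\setminus S$ and $N(S)$ lies in any perfect matching. Every edge is allowed, so there are no such edges, and $\partial(S\cup N(S))$ is empty. The set $S\cup N(S)$ is nonempty, and it is proper because $|S|<|A|$. This contradicts the connectivity of $G$.

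\emph{Sufficiency.} Assume the strict Hall condition holds. Taking $S=A$ gives $|N(A)|=|B|$, and for proper $S$ the condition is stronger than needed, so Hall's theorem yields a perfect matching. To show an edge $ab$ with $a\in A$, $b\in B$ is allowed, I find a perfect matching of $G'=G-a-b$ and add $ab$.
\begin{itemize}
\item If $|A|=1$, then $G'$ is empty and there is nothing to prove.
\item Otherwise, take any $S\subseteq A\setminus\{a\}$ with $S\neq\emptyset$. Then $|S|\le|A|-1$, so $|N_{G'}(S)|\ge |N_G(S)|-1\ge |S|$.
\end{itemize}
Hall's theorem then gives a perfect matching of $G'$.

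For connectivity, suppose $G$ is disconnected and let $K$ be a component. Since $G$ has a perfect matching, $|V(K)\cap A|=|V(K)\cap B|$. Take $S=V(K)\cap A$.
\begin{itemize}
\item $S$ is nonempty: a perfect matching is present, so $K$ has an edge and hence contains vertices of $A$.
\item $S\neq A$: otherwise $K$ contains all of $A$ and all of $B$, so $K=G$.
\end{itemize}
Now $N(S)=V(K)\cap B$ has size $|S|$, which violates the strict condition.

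The arguments are routine. The only point needing care is the tightness step in necessity, namely that $|N(S)|=|S|$ forces edges from $A\setminus S$ into $N(S)$ to be unallowed. The hypothesis of at least 4 vertices only ensures that the range $1\le|S|\le|A|-1$ is nonempty.
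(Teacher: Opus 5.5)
The paper gives no proof of this lemma; it is quoted from Lucchesi and Murty. Your argument is the standard derivation from Hall's theorem and is correct in substance. In the necessity direction, a set $S$ with $|N(S)|=|S|$ forces every edge from $N(S)$ to $A\setminus S$ to be unallowed, which isolates $S\cup N(S)$. In the sufficiency direction, $G-a-b$ inherits Hall's condition.

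One step needs repair. The hypothesis only covers $1\le|S|\le|A|-1$, so ``taking $S=A$'' to get $|N(A)|=|B|$ is not available. Instead, since $|A|\ge 2$, apply the hypothesis to $S=A\setminus\{a\}$: this gives $|N(S)|\ge|A|=|B|$, hence $N(A)=B$, and Hall's condition then holds for every $S\subseteq A$. This is exactly where the four-vertex hypothesis does real work. When $|A|=|B|=1$ the condition is vacuous, and the edgeless graph on two vertices satisfies it without being matching covered. For the same reason, your $|A|=1$ bullet is unnecessary.
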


Let $G$ and $H$ be two vertex-disjoint graphs and let $u$ and $v$ be vertices of $G$ and $H$, respectively, such that $d_G(u)=d_H(v)$.
Moreover, let $\theta$ be a given bijection between $\partial_H(v)$ and $\partial_G(u)$.
We denote by $(G(u)\odot H(v))_\theta$ the graph obtained from the union of $G-u$ and $H-v$ by joining, for each edge $e$ in $\partial_H(v)$, the end vertex of $e$ in $H$ belonging to $V(H)-v$ to the end vertex of $\theta(e)$ in $G$ belonging to $V(G)-u$;
and refer to $(G(u)\odot H(v))_\theta$ as the graph obtained by \emph{splicing $G$ (at $u$)   with $H$ (at $v$), with respect to the bijection $\theta$}, for brevity, to $G(u)\odot H(v)$. We say that $u$ and $v$ are the {\em splicing vertices} of $G$ and $H$, respectively.
In general, the graph resulted from splicing two graphs $G$ and $H$ depends on the choice of $u$, $v$ and $\theta$. The following proposition can be gotten by the definition of matching covered graphs directly (e.g., see Theorem 2.13 in \cite{Lucchesi2024}). 

\begin{pro}\label{thm:MC_IS_MC}
    The splicing of two matching covered graphs is also matching covered.
\end{pro}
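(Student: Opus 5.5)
The statement to prove is Proposition \ref{thm:MC_IS_MC}. Write $G'=(G(u)\odot H(v))_\theta$ and $C=\partial_{G'}(V(G)-u)$. The plan is to verify directly that $G'$ is connected and that every edge of $G'$ lies in a perfect matching. The key observation is that $C$ is in natural bijection with $\partial_G(u)$, and also with $\partial_H(v)$ through $\theta$. Each edge $e\in\partial_H(v)$ gives the edge $e'\in C$ joining the $H$-end of $e$ to the $G$-end of $\theta(e)$. Both $G$ and $H$ are matching covered, hence connected, and have even order. So $|V(G)-u|$ and $|V(H)-v|$ are odd, and $G'$ has even order.

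For connectivity, I would argue as follows. $C$ is nonempty because $d_G(u)\ge1$. Every component of $G-u$ contains a neighbour of $u$, since $G$ is connected, and that neighbour is an end of some edge of $C$. The same holds for $H-v$. Since every edge of $C$ has one end in $G-u$ and one in $H-v$, all components are joined through $C$, so $G'$ is connected.

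For the matching property, I would handle each edge $f$ of $G'$ in one of two cases. Suppose first that $f\in E(G-u)$, or that $f\in C$ comes from $\theta(e)\in\partial_G(u)$. Choose a perfect matching $M_G$ of $G$ containing $f$, or containing $\theta(e)$ in the second case. $M_G$ contains exactly one edge $\theta(e_0)$ of $\partial_G(u)$. Choose a perfect matching $M_H$ of $H$ containing $e_0$, which exists because $H$ is matching covered. Then
$$M=(M_G-\theta(e_0))\cup(M_H-e_0)\cup\{e_0'\}$$
is a perfect matching of $G'$. Every vertex of $G-u$ other than the $G$-end of $\theta(e_0)$ is covered by $M_G-\theta(e_0)$, and every vertex of $H-v$ other than the $H$-end of $e_0$ is covered by $M_H-e_0$. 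The edge $e_0'$ covers those two remaining ends. By construction $M$ contains $f$; in the case $f\in C$ we have $e_0=e$, so $f=e_0'$. The second case is $f\in E(H-v)$, which is symmetric: start from $M_H$ containing $f$ and extend through $G$.

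There is no real obstacle here. The only points needing care are multiple edges and the fact that $\theta$ is a bijection of edges, not vertices. Several edges of $\partial_G(u)$ may share an end, but the construction only uses one edge $e_0$ at a time, so parallel edges cause no trouble.
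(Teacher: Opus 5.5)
Your perfect-matching argument is correct, including the handling of parallel edges. It is exactly the direct verification the paper has in mind; the paper gives no proof and only cites Theorem 2.13 of Lucchesi--Murty. A perfect matching of $G$ through $f$ meets $\partial_G(u)$ in a single edge $\theta(e_0)$, and you glue it to a perfect matching of $H$ through $e_0$ along $e_0'$.

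The connectivity step, however, has a genuine gap. Knowing that every component of $G-u$ and every component of $H-v$ meets an edge of $C$ does not make $G'$ connected, because the edges of $C$ could pair the components off. For example, splice the path $a_1ua_2$ with the path $b_1vb_2$ via $\theta(vb_i)=ua_i$. The result is the two disjoint edges $a_1b_1$ and $a_2b_2$, yet every condition you invoke holds.

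Your argument uses only that $G$ and $H$ are connected, and that is not enough. You need $G-u$ and $H-v$ themselves to be connected, and this is where the matching covered hypothesis must enter. The fix is short:
\begin{itemize}
\item If $|V(G)|=2$, then $G-u$ is a single vertex.
\item Otherwise, suppose $G-u$ has at least two components. A perfect matching of $G$ matches $u$ into exactly one of them, so every other component $D$ is matched internally and hence is even.
\item Since $G$ is connected, there is an edge $uw$ with $w\in V(D)$. This edge lies in no perfect matching, since $D-w$ has odd order and would have to be matched internally. That contradicts $G$ being matching covered.
\end{itemize}
Hence $G-u$ is connected; equivalently, matching covered graphs on at least four vertices are $2$-connected. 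The same holds for $H-v$, and since $C\neq\emptyset$, $G'$ is connected.
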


\section{Removable edges in bipartite graphs} 
The following lemma is easy to verify by the definition of matching covered graphs. 
 \begin{lem}[\cite{CLM99}]\label{lem:re_also_re}
    Let $G$ be a matching covered graph with a nontrivial tight cut $C$. Let $G_1$ and $G_2$ be the two $C$-contractions of $G$. Then for an edge $e$ in $G$, $G-e$ is matching covered if and only if $G_1-e$ and $G_2-e$ are matching covered.
\end{lem}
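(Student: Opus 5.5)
We prove Lemma~\ref{lem:re_also_re} by comparing perfect matchings of $G-e$ with those of $G_1-e$ and $G_2-e$. Write $C=\partial(X)$, $G_1=G/(\overline{X}\to \bar x)$ and $G_2=G/(X\to x)$. Regard $E(G_1)$ and $E(G_2)$ as subsets of $E(G)$: $E(G_1)=E(G[X])\cup C$ and $E(G_2)=E(G[\overline X])\cup C$. Here ``$G_i-e$'' means $G_i$ itself when $e\notin E(G_i)$. Since $C$ is tight, every perfect matching $M$ of $G$ meets $C$ in exactly one edge. So $M\cap E(G_1)$ and $M\cap E(G_2)$ are perfect matchings of $G_1$ and $G_2$.

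The key step is the converse gluing principle. If $M_1$ and $M_2$ are perfect matchings of $G_1$ and $G_2$ sharing the same edge $f\in C$, then $M_1\cup M_2$ is a perfect matching of $G$. Note that $C$ need not be tight in $G-e$, so this gluing must be done directly rather than by citing tightness there. We then handle connectivity and allowedness separately.

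\emph{Connectivity.} $G_1$ and $G_2$ are connected because $G$ is. If $G-e$ is connected, then contracting a side of $C$ preserves connectivity, so $G_i-e$ is connected. Conversely, suppose $G_1-e$ and $G_2-e$ are connected. Then $G[X]$ together with the edges of $C\setminus\{e\}$ is connected after contracting $\overline X$, and similarly on the other side. Gluing along a common edge of $C\setminus\{e\}$ gives connectivity of $G-e$. Such a common edge exists because $|C|\ge 2$: the cut is nontrivial in a matching covered graph, so $|C|\ge 3$ unless it is a bridge-like degenerate case.

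\emph{($\Rightarrow$)} Assume $G-e$ is matching covered. Take an edge $g$ of $G_1-e$ and a perfect matching $M$ of $G-e$ containing $g$. Then $|M\cap C|$ is odd; we need it to equal $1$. Parity alone does not give this, so we argue as follows. Any perfect matching of $G-e$ is also a perfect matching of $G$, and $C$ is tight in $G$. Hence $|M\cap C|=1$, and $M\cap E(G_1)$ is a perfect matching of $G_1-e$ containing $g$. The same argument applies to $G_2-e$.

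\emph{($\Leftarrow$)} Assume $G_1-e$ and $G_2-e$ are matching covered, and take an edge $g$ of $G-e$. Say $g\in E(G_1)$; the case $g\in E(G_2)$ is symmetric. Choose a perfect matching $M_1$ of $G_1-e$ containing $g$, and let $f$ be its edge in $C$; since the contracted vertex is covered, $f$ exists and $f\neq e$. Then $f$ is an edge of $G_2-e$, which is matching covered, so there is a perfect matching $M_2$ of $G_2-e$ containing $f$. Now $M_1\cup M_2$ is a perfect matching of $G-e$ containing $g$.

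The main subtlety is the bookkeeping when $e\in C$ versus $e$ inside one shore, together with the connectivity check. When $e\in C$, the edge $e$ is deleted from both contractions, and the gluing uses only edges $f\ne e$. Beyond that, no real obstacle is expected, consistent with the paper calling the lemma easy.
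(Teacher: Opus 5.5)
Your perfect-matching argument is correct and is the standard verification behind this lemma (the paper gives no proof; it cites \cite{CLM99} and calls the lemma easy). In ($\Rightarrow$), a perfect matching of $G-e$ is a perfect matching of $G$, so it meets $C$ exactly once and restricts to each shore. In ($\Leftarrow$), $M_1\cup M_2$ glued along $f\in C\setminus\{e\}$ is a perfect matching of $G-e$ containing $g$. Connectivity in ($\Rightarrow$) is also fine.

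\textbf{The gap: connectivity of $G-e$ in ($\Leftarrow$).} The inference ``$G_1-e$ and $G_2-e$ are connected and share an edge of $C\setminus\{e\}$, hence $G-e$ is connected'' is false. In $G_1-e$, distinct pieces of $G[X]-e$ may be joined only through $\bar x$, and in $G_2-e$, distinct pieces of $G[\overline X]-e$ only through $x$. For example, take $X=\{a,c_1,c_2\}$, $\overline X=\{b,d_1,d_2\}$ and edges $ab,c_1c_2,c_2d_1,d_1d_2$. Both contractions are connected and share the cut edges $ab$ and $c_2d_1$, yet the graph has two components. So you must use that $G_1-e$ and $G_2-e$ are matching covered, not merely connected.

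\textbf{Repair.} A matching covered graph on at least four vertices is $2$-connected, and $|V(G_1-e)|=|X|+1\ge 4$. Hence $G[X]-e=(G_1-e)-\bar x$ is connected, and likewise $G[\overline X]-e$ is connected. Moreover $C\setminus\{e\}\neq\emptyset$, because $\bar x$ is covered by a perfect matching of $G_1-e$. Together these give that $G-e$ is connected.

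A parity argument also works. Since both contractions are connected, every component $K$ of $G-e$ meets both shores, so it contains some $h\in C\setminus\{e\}$. Gluing then gives a perfect matching of $G-e$ meeting $C$ only in $h$, so $|V(K)\cap X|$ is odd for every $K$. But any single glued matching meets $C$ once, so at most one component can have odd intersection with $X$.

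\textbf{A smaller point.} Your claim that $|C|\ge 3$ is unjustified and false in general. A nontrivial tight cut can have $|C|=2$, e.g.\ the cut separating three consecutive vertices of a $6$-cycle, or $\partial_G(Y)$ in the proof of Theorem~\ref{thm:2-or-3-lines} when $d_G(u_1)=d_G(u_2)=2$. You only need $C\setminus\{e\}\neq\emptyset$, which the repair above supplies.
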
 

\begin{lem}[\cite{CLM15}]\label{lem:nonre-bi}
    Let $G[A,B]$ be a matching covered bipartite graph, and $|E(G)|\ge2$.
    An edge $uv$ of $G$, with $u\in A$ and $v\in B$, is not removable in $G$ if and only if there exist nonempty proper subsets $A_1$ and $B_1$ of $A$ and $B$, respectively, such that:\\
    {\rm (i)} the subgraph $G[A_1\cup B_1]$  is matching covered, and\\
    {\rm (ii)} $u\in A_1$ and $v\in B\setminus B_1$, and $E[A_1,B\setminus B_1 ]=\{uv\}$.
\end{lem}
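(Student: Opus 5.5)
The statement to prove is Lemma \ref{lem:nonre-bi}, a characterization of non-removable edges in bipartite matching covered graphs. I would prove both directions using Lemma \ref{lem:bipartite-M-C-Hall} applied to $G-uv$, together with the basic fact that in a bipartite graph with a perfect matching, an edge $xy$ is allowed if and only if there is no set $S\subseteq A$ with $x\in S$, $y\notin N(S)$ and $|N(S)|=|S|$. This is the standard Hall/König criterion: $xy$ is allowed exactly when $G-x-y$ satisfies Hall's condition.

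\emph{Sufficiency.} Suppose $A_1,B_1$ satisfy (i) and (ii). Since $G[A_1\cup B_1]$ is matching covered, it has a perfect matching, so $|A_1|=|B_1|$. Because $E[A_1,B\setminus B_1]=\{uv\}$, in $G-uv$ we get $N_{G-uv}(A_1)=B_1$, hence $|N_{G-uv}(A_1)|=|A_1|$.

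Now suppose $G-uv$ were matching covered.
\begin{itemize}[leftmargin=*]
\item If $G-uv$ has at least $4$ vertices, Lemma \ref{lem:bipartite-M-C-Hall} requires $|N(S)|\ge |S|+1$ for every nonempty proper subset $S\subset A$. Taking $S=A_1$ contradicts the equality above.
\item Otherwise $|V(G)|=2$. Then $G$ is a multi-edge $K_2$, and $A_1$ cannot be a nonempty proper subset of $A$, so this case does not arise.
\end{itemize}
Hence $uv$ is not removable. (Connectivity is not the issue here; the counting argument alone suffices.)

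\emph{Necessity.} Suppose $uv$ is not removable, so $G-uv$ is not matching covered. I split according to the size of $G$.
\begin{itemize}[leftmargin=*]
\item \emph{$|V(G)|=2$.} Then $G-uv$ is still a nonempty multi-$K_2$ because $|E(G)|\ge2$, so it is matching covered. Hence this case cannot occur.
\item \emph{$|V(G)|\ge4$.} By Lemma \ref{lem:bipartite-M-C-Hall} there is $S\subseteq A$ with $1\le |S|\le |A|-1$ and $|N_{G-uv}(S)|\le |S|$. Since $G$ itself satisfies $|N_G(S)|\ge|S|+1$ and deleting $uv$ removes at most one neighbour, we get $|N_{G-uv}(S)|=|S|$. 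Moreover $u\in S$, $v\in N_G(S)\setminus N_{G-uv}(S)$, and $uv$ is the only edge from $S$ to $v$.
\end{itemize}

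Among all such tight sets, I choose $A_1=S$ minimal by inclusion and set $B_1=N_{G-uv}(S)$. Both are nonempty proper subsets, and $v\notin B_1$. Also $E[A_1,B\setminus B_1]=\{uv\}$: any other edge leaving $A_1$ would land in $N_{G-uv}(A_1)=B_1$. This gives (ii).

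The main obstacle is (i), that $H=G[A_1\cup B_1]$ is matching covered. I would argue as follows.
\begin{itemize}[leftmargin=*]
\item \emph{$H$ has a perfect matching.} Take a perfect matching $M$ of $G$ avoiding $uv$; one exists because some edge at $u$ other than $uv$ lies in a perfect matching, as $d(u)\ge2$ in a matching covered graph with at least $4$ vertices. All of $A_1$ is matched by $M$ into $B_1$, and $|A_1|=|B_1|$, so $M$ restricts to a perfect matching of $H$.
\item \emph{Every edge $xy$ of $H$ is allowed in $H$.} Take a perfect matching $M$ of $G$ containing $xy$. If $uv\notin M$, then $M$ restricts to $H$ as before. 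If $uv\in M$, then $A_1\setminus\{u\}$ is matched into $B_1$, so one vertex $b\in B_1$ is matched to some $a\in A\setminus A_1$. I would try an alternating-path exchange to repair this, but it is simpler to use Hall inside $H$ directly.
\item \emph{Hall inside $H$.} For $T\subset A_1$ nonempty and proper, we need $|N_H(T)|\ge|T|+1$. If instead $|N_H(T)|=|T|$, then, since $N_H(T)=N_{G-uv}(T)$ (edges from $T\subseteq A_1$ go only to $B_1$ besides $uv$), $T$ is tight in $G-uv$. If $u\in T$, this contradicts the minimality of $A_1$. If $u\notin T$, then $N_G(T)=N_{G-uv}(T)$ has size $|T|$, contradicting Lemma \ref{lem:bipartite-M-C-Hall} for $G$.
\end{itemize}
So $H$ satisfies Hall's strict condition. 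If $|A_1|\ge2$, Lemma \ref{lem:bipartite-M-C-Hall} together with connectivity makes $H$ matching covered. If $|A_1|=1$, then $H$ is a $K_2$, possibly with multiple edges, which is trivially matching covered.
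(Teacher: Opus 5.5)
Your proof is correct. The paper gives no proof of this lemma; it is quoted from Carvalho, Lucchesi and Murty \cite{CLM15}, so there is no in-paper argument to compare against. Your argument is a self-contained derivation from Lemma~\ref{lem:bipartite-M-C-Hall} alone.

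\emph{Sufficiency.} This is the one-line observation that $A_1$ violates the strict Hall inequality in $G-uv$.

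\emph{Necessity.} The key move is choosing an inclusion-minimal tight set $A_1$ of $G-uv$. A perfect matching of $G$ avoiding $uv$ then restricts to a perfect matching of $G[A_1\cup B_1]$. Minimality, together with strict Hall in $G$ for sets not containing $u$, gives strict Hall inside $G[A_1\cup B_1]$. You also handle the details correctly:
\begin{itemize}
\item multiple edges: deleting $uv$ drops $v$ from $N(S)$ only when $uv$ is the unique edge between $S$ and $v$;
\item the degenerate cases $|V(G)|=2$ and $|A_1|=1$.
\end{itemize}

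A few things to clean up.
\begin{itemize}
\item The ``basic fact'' in your opening paragraph is misstated. If $xy\in E(G)$ and $x\in S$, then automatically $y\in N(S)$, so as written the condition is vacuous and would say every edge is allowed. The correct form is: $xy$ is not allowed if and only if some $S\subseteq A$ with $x\notin S$ and $y\in N(S)$ satisfies $|N(S)|=|S|$. You never use this fact, so simply delete it.
\item Likewise, drop the abandoned alternating-path bullet.
\item You do not need to invoke connectivity of $G[A_1\cup B_1]$ separately. Lemma~\ref{lem:bipartite-M-C-Hall} as stated has no connectivity hypothesis. Indeed, strict Hall with $|A_1|=|B_1|\ge 2$ already forces connectedness: an isolated vertex of $B_1$ violates strict Hall for $A_1$ minus a vertex, and two or more components would give $|B_1|>|A_1|$.
\item In the Hall step, say explicitly why $|N_H(T)|<|T|$ cannot occur. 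This follows either because $H$ has a perfect matching, or because $|N_{G-uv}(T)|\ge |N_G(T)|-1\ge |T|$.
\end{itemize}
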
 

Let $G[A,B]$ be a matching covered bipartite graph with at least two edges and let $X$ be a vertex subset of $G$ such that $|X\cap A|=|X\cap B|$.
We say that $X$ is a {\em $P$-set} of $G$ if $G[X]$ is matching covered, and either $|E[X\cap A,\overline{X}\cap B]|=1$ or $|E[\overline{X}\cap A,X\cap B]|=1$. Obviously, $\overline{X}$ is a $P$-set if $X$ is a $P$-set. If $E[X\cap A,\overline{X}\cap B]=\{e\}$, we say that $X$ is a $P$-set {\em associated with} $e$, and $X\cap A$ and $\overline{X}\cap B$ are a pair of {\em end sets} of $e$.  
By Lemma \ref{lem:nonre-bi}, a nonremovable edge must have a pair of end sets (not unique).  

\begin{lem}\label{lem:nonre-forest}
    Let $G[A,B]$ be a matching covered bipartite graph with at least 4 vertices. If $\delta(G)\ge3$, then the subgraph induced by all nonremovable edges of $G$ is a forest.
    \begin{proof}
        Assume that $uv_1$ and $uv_2$ are two adjacent nonremovable edges of $G$ with a common end vertex $u$, where $u\in A$. By Lemma \ref{lem:nonre-bi}, we may assume that $X$ and $Y$ are two $P$-sets associated with $uv_1$ and $uv_2$, respectively.
        Adjust notation so that $u\in X\cap Y$. 
    \begin{figure}[!h]
    \centering
    \includegraphics[totalheight=3cm]{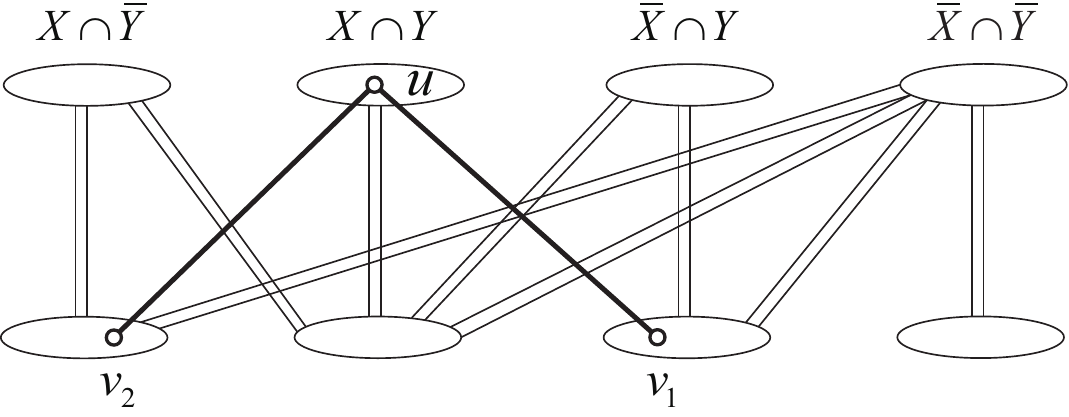}
    \caption{Illustration for the proof of Lemma \ref{lem:nonre-forest}.}
    \label{fig:2-10}
\end{figure}
        \begin{claim}[1]
            $\overline{X}\cap \overline{Y}=\emptyset$.
            \begin{proof}
                Note that $E[X\cap A, \overline{X}\cap B]=\{uv_1\}$, $E[Y\cap A, \overline{Y}\cap B]=\{uv_2\}$, $v_1\in \overline{X}\cap Y$ and $v_2\in \overline{Y}\cap X$.  Then $E[\overline{X}\cap \overline{Y}\cap B, (X\cup Y)\cap A]=\emptyset$. It means that $N(\overline{X}\cap \overline{Y}\cap B)\subseteq \overline{X}\cap \overline{Y}\cap A$.  Then we have $\overline{X}\cap \overline{Y}\cap B=\emptyset$ or $|\overline{X}\cap \overline{Y}\cap B|+1\le|N(\overline{X}\cap \overline{Y}\cap B)|\le |\overline{X}\cap \overline{Y}\cap A|$ by Lemma \ref{lem:bipartite-M-C-Hall}, as $\{v_1,v_2\}\subseteq B\setminus (\overline{X}\cap \overline{Y})$.
                Thus, either $\overline{X}\cap \overline{Y}\cap A=\emptyset$ (which occurs when $\overline{X}\cap \overline{Y}\cap B=\emptyset$), or $|\overline{X}\cap \overline{Y}\cap A|\ge |\overline{X}\cap \overline{Y}\cap B|+1$.

                Suppose to the contrary that $|\overline{X}\cap \overline{Y}\cap A|\ge |\overline{X}\cap \overline{Y}\cap B|+1$.
                Since $|\overline{X}\cap A|=|\overline{X}\cap B|$ (note that $\overline{X}$ is a $P$-set of $G$), we have $|\overline{X}\cap Y\cap A|\le |\overline{X}\cap Y\cap B|-1$. Similarly,  $|X\cap Y\cap A|\ge |X\cap Y\cap B|+1$, as $X$ is a $P$-set.
                Recall that $E[X\cap A, \overline{X}\cap B]=\{uv_1\}$, $E[Y\cap A, \overline{Y}\cap B]=\{uv_2\}$ and $u\in X\cap Y\cap A$. Then $N((X\cap Y\cap A)\setminus\{u\})\subseteq X\cap Y\cap B$.
                By Lemma \ref{lem:bipartite-M-C-Hall} again, we have $(X\cap Y\cap A)\setminus\{u\}=\emptyset$ or $|X\cap Y\cap B|\ge|(X\cap Y\cap A)\setminus\{u\}|+1$.
                It follows that either $X\cap Y\cap B=\emptyset$ or $|X\cap Y\cap B|\ge|(X\cap Y\cap A)\setminus\{u\}|+1$.
                Note that the latter case contradicts $|X\cap Y\cap A|\ge |X\cap Y\cap B|+1$.
                Thus, we have $X\cap Y\cap B=\emptyset$. 
                Since $N(u)\setminus\{v_1,v_2\}\subseteq X\cap Y\cap B$ and $u$ is not incident with multiple edges, we have $d_G(u)=2$, contradicting the assumption that $\delta(G)\ge3$.
                Therefore, $|\overline{X}\cap \overline{Y}\cap A|= |\overline{X}\cap \overline{Y}\cap B|$.
                Then $\overline{X}\cap \overline{Y}=\emptyset$, the claim holds.
            \end{proof}
        \end{claim}
        \begin{claim}[2] For any end set $X\cap A$ of $uv_1$ in $A$, there exists an end set $\overline{Y}\cap B$ of $uv_2$ in $B$, such that $|X\cap A|> |\overline{Y}\cap B|$ and $|Y\cap A|> |\overline{X}\cap B|$.  
            \begin{proof}
                As $\overline{Y}$ is a $P$-set of $G$, we have $|\overline{Y}\cap A|=|\overline{X}\cap \overline{Y}\cap A|+|X\cap \overline{Y}\cap A|=|\overline{X}\cap \overline{Y}\cap B|+|X\cap \overline{Y}\cap B|=|\overline{Y}\cap B|$.
                Since $\overline{X}\cap \overline{Y}=\emptyset$ (by Claim 1), we have $|X\cap \overline{Y}\cap A|=|X\cap \overline{Y}\cap B|$.
                Then $|X\cap A|=|X\cap Y\cap A|+|X\cap \overline{Y}\cap A|>|X\cap \overline{Y}\cap B|$ (note that $u\in X\cap Y\cap A$).
                Recall that $\overline{X}\cap \overline{Y}=\emptyset$. Then $\overline{Y}\cap B=X\cap \overline{Y}\cap B$. So the claim holds. 
            \end{proof}
        \end{claim}

        Suppose to the contrary that $u_1u_2\ldots u_ku_1$  ($k$ is even)  is a cycle of $G$, each edge of which is not removable in $G$, where $\{u_1,u_3,\ldots,u_{k-1}\}\subseteq A$ and $\{u_2,u_4,\ldots,u_k\}\subseteq B$. 
        Since $u_1u_2$ and $u_2u_3$  are nonremovable edges of $G$ incident with $u_2$, by Claim 2, there are an end set  $W_1$ of $u_1u_2$ in $B$ and an end set $W_2$ of $u_2u_3$ in $A$ such that $|W_1|>|W_2|$. By Claim 2 again, there exists an end set $W_3$ of $u_3u_4$  in $B$ such that $|W_2|>|W_3|$ as $u_2u_3$ and $u_3u_4$ are incident with $u_3$.
        By using Claim 2 repeatedly, there exist vertex subsets  $W_4$,  $W_5$,  $\ldots$ , $W_{k-1}$, $W_k$ in $G$ such that $|W_4| > |W_5| >\ldots> |W_k|$, where  $W_i$ is an  end set of $u_iu_{i+1}$ in $A$ for each even $4\le i\le k$, and $W_j$ is an end set of $u_ju_{j+1}$ in $B$ for each odd $5\le j\le k-1$ (the subscript is modulo $k$).  
        Since $u_1u_2\ldots u_ku_1$ is a cycle, continuing this process we would obtain an infinite sequence of vertex subsets $W_1$, $W_2$, $\ldots$, $W_k$, $W_{k+1}$, $\ldots$ such that $|W_l|>|W_{l+1}|$ for each $l\geq 1$,  which contradicts the fact that $G$ has only finite number of vertices. 
        Therefore, the result holds.
    \end{proof}
\end{lem}
\begin{remark}
    {\rm Note that the condition $\delta(G)\ge3$ in Lemma \ref{lem:nonre-forest} is necessary. For example, Lemma \ref{lem:nonre-forest} does not hold for an even cycle with at least 4 vertices, since each edge of it is nonremovable.}
\end{remark}
\begin{cor}\label{cor:nonre-at-most-2} 
    Let $G[A,B]$ be a matching covered bipartite graph with at least 4 vertices. If $\delta(G)\ge3$, then at least two vertices in $A$ and at least two vertices in $B$ are incident with at most two nonremovable edges, respectively.
    \begin{proof}
        Suppose to the contrary that there exists at most one vertex of $A$ that is incident with at most two nonremovable edges.
        Then the number of nonremovable edges of $G$ is at least $3(|A|-1)$.
        By Lemma \ref{lem:nonre-forest}, the number of nonremovable edges of $G$ is at most $|V(G)|-1$.
        So $|A|\le 2$. Thus, the underlying simple graph of $G$ is a cycle of order 4.
        It can be checked that every vertex of $G$ is incident with at most one nonremovable edge, as $\delta(G)\ge3$ and every vertex of $G$ has exactly two neighbors, which contradicts the above supposition.
        Therefore, $A$ contains at least two vertices that are  incident with at most two nonremovable edges. So does $B$. 
    \end{proof}
\end{cor}

\section{Removable edges in bicritical graphs}

A matching covered  graph that is free of nontrivial tight cuts is  a {\em brick} if it is nonbipartite and a {\em brace}  if it is bipartite. 
 Moreover, a graph $G$ is a brick if and only if $G$ is 3-connected and bicritical \cite{ELP82}.  
 The {\em tight cut decomposition}, due to Lov\'{a}sz \cite{Lovasz87}, can be applied to a matching covered graph  to produce a unique list of bricks and braces up to multiple edges. 

 Lov\'asz \cite{lo} proved  that every brick different from $K_4$ (the complete graph with 4 vertices) and the triangular prism (the complement of a cycle of length 6) has a removable edge. Improving Lov\'asz's result, Carvalho, Lucchesi and Murty obtained a lower bound of removable classes of a brick in terms of the maximum degree.
\begin{thm}[\cite{CLM99}]\label{thm:re_in_brick}
   Every brick has at least $\Delta(G)$ removable classes. Moreover, every brick different from $K_4$ and the triangular prism has at least $\Delta(G)-2$ removable edges. 
\end{thm}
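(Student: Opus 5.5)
This is a cited result from \cite{CLM99} (Carvalho, Lucchesi and Murty), so within this paper it is taken as given. If I were to prove it, I would proceed as follows.

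\emph{Approach.} Fix a vertex $v$ of maximum degree $\Delta=\Delta(G)$ in the brick $G$. The goal is to exhibit, for essentially each edge of $\partial(v)$, a distinct removable class, found near that edge. The key tool is the theory of robust cuts: every brick other than $K_4$ and the triangular prism has a nontrivial separating cut $\partial(X)$ whose two cut-contractions are near-bricks. From this one develops an induction on $|V(G)|$: a removable edge of a cut-contraction that does not lie in the cut lifts to a removable edge of $G$. This lifting is the analogue, for separating cuts in bricks, of Lemma \ref{lem:re_also_re}; it holds provided a Tutte-type parity condition is checked.

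\emph{Key steps.}
\begin{enumerate}[label=(\roman*)]
\item Characterize nonremovable edges $e$ in a brick. There is a barrier $B$ of $G-e$ with $|B|\ge 2$ such that $e$ joins two vertices of $B$. Consequently, for any edge $f\in\partial(v)$, nonremovability of $f$ is witnessed by a barrier containing $v$ and the other end of $f$.
\item Fix $v$. For each edge $f=vw$, either $f$ is removable or lies in a removable doubleton, or one obtains a witnessing barrier $B_f$ of $G-f$. Using uncrossing of barriers together with the fact that a brick is 3-connected and bicritical, show that the barriers $B_f$ corresponding to different neighbors $w$ yield disjoint regions.
\item In each such region, find an edge that is removable (or a doubleton) in $G$. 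This gives one removable class per edge of $\partial(v)$ and hence at least $\Delta$ classes.
\item For the bound on removable edges, observe that a doubleton occurs only in very restricted configurations: $K_4$-like or prism-like pieces, built from a triangle and a bicritical structure. Show that at most two of the $\Delta$ classes can be doubletons unless $G$ is $K_4$ or the prism. This yields at least $\Delta-2$ removable edges.
\end{enumerate}

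\emph{Main obstacle.} I expect step (ii)--(iii) to be the hardest: the barrier-uncrossing argument guaranteeing that the classes found for distinct edges at $v$ are distinct. Here I would first try an extremal choice, namely picking each $B_f$ so that the associated odd component is minimal. I would then apply the induction inside the corresponding near-brick contraction, rather than attempting a direct global argument.
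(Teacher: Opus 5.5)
You are right that the paper gives no proof of this statement: it is quoted from \cite{CLM99} and used as a black box (e.g.\ in Corollary~\ref{cor:bicritical-wi-re}). The sketch you add, however, would not prove it. Two of its load-bearing claims are false, and the central steps are only asserted.

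\emph{Step (i) is wrong.} Suppose $B$ is a barrier of $G-e$ containing an end of $e$. Then $G-B=(G-e)-B$, so $B$ is also a barrier of $G$. For a brick this forces $|B|\le 1$ by Proposition~\ref{pro:mc_is_Bi}. Hence a barrier witnessing that $f=vw$ is nonremovable contains neither $v$ nor $w$, so your ``consequently'' clause is false.

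The correct statement is this: $f$ is nonremovable iff some other edge $g$ has both ends in a barrier $B$ of $G-f$. Bicriticality then forces $v$ and $w$ to lie in two distinct odd components of $(G-f)-B$. Your step (ii) is therefore built on the wrong picture.

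Beyond that, three things are asserted with no argument:
\begin{itemize}
\item that the ``regions'' for distinct neighbours $w$ are disjoint (step (ii));
\item that each region contains a removable class (step (iii));
\item that at most two of the $\Delta(G)$ classes are doubletons unless $G$ is $K_4$ or the prism (step (iv)).
\end{itemize}
These are the whole content of the theorem, not routine verifications.

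\emph{The robust-cut framework does not cover the cases that matter.} Robust cuts exist only in \emph{nonsolid} bricks. Solid bricks have no nontrivial separating cuts at all, and they are far more than $K_4$: every odd wheel is solid. Conversely, the triangular prism, which you exclude, is nonsolid: its triangle cut is robust, with both contractions equal to $K_4$. So your claim that every brick other than $K_4$ and the prism has a robust cut is false. Your induction has no mechanism for solid bricks, and odd wheels are among the bricks that attain the bound $\Delta(G)$ exactly.

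Even for nonsolid bricks the induction is not set up correctly:
\begin{itemize}
\item The contractions are near-bricks rather than bricks, so your inductive hypothesis does not apply to them as stated.
\item A removable class of a contraction that uses cut edges need not correspond to a removable class of $G$.
\item A doubleton of a contraction may contain an edge that is removable in $G$, so it need not give a distinct removable class of $G$.
\end{itemize}
Nothing in the proposal explains how the count $\Delta(G)$ survives the passage to contractions.
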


 A matching covered nonbipartite graph $G$ is {\em near-bipartite} if it has a pair of edges $e$ and $f$ such that the subgraph $G-e-f$ is a matching covered bipartite graph.
In fact, every brick with a removable doubleton is near-bipartite \cite{Lovasz87}.
 The following theorem is from an unpublished paper \cite{LV-Unpublished_MS} by Lov\'{a}sz and Vempala; see also Theorem 9.17 of \cite{Lucchesi2024}.  
\begin{thm}[\cite{LV-Unpublished_MS}] \label{thm:sim-near-bi-brick}
      Every simple near-bipartite brick distinct from $K_4$, the triangular prism  and $R_8$ (see Figure \ref{fig:r8}) has two nonadjacent removable edges.
\end{thm}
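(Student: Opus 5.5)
This result is cited from the unpublished manuscript \cite{LV-Unpublished_MS} (see also Theorem 9.17 of \cite{Lucchesi2024}), so it is assumed here rather than reproved. If one wanted a self-contained argument, the plan would be as follows.

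Let $G$ be a simple near-bipartite brick, and fix edges $e,f$ such that $H=G-e-f$ is a matching covered bipartite graph $H[A,B]$. Because $G$ is a brick and not bipartite, both $e$ and $f$ have their ends in the same colour class; in fact one of them joins two vertices of $A$ and the other joins two vertices of $B$. The idea is to look for the removable edges of $G$ among the edges of $H$. We then transfer to $G$ the control over nonremovable edges of $H$ that Lemma \ref{lem:nonre-bi} and Lemma \ref{lem:nonre-forest} provide. Let $g\in E(H)$ and suppose $H-g$ is matching covered. We would show that $G-g$ is still matching covered, by checking that $e$ and $f$ remain allowed in $G-g$. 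For this we would argue with alternating paths in $H-g$ between the ends of $e$ and of $f$, using bicriticality of $G$. Hence every removable edge of $H$ whose deletion keeps $e$ and $f$ allowed is removable in $G$.

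The next step is to show that $H$ has many removable edges. Since $G$ is 3-connected, $H$ has minimum degree at least 2, and only the four ends of $e$ and $f$ can drop to degree 2. We would apply Lemma \ref{lem:nonre-forest}, or rather its proof adapted to allow these few degree-2 vertices, to conclude that the nonremovable edges of $H$ form a forest away from those vertices. An edge count then gives many removable edges of $H$, in the spirit of Corollary \ref{cor:nonre-at-most-2}. Among these we would pick two that are nonadjacent and that, in addition, do not destroy the allowedness of $e$ or $f$.

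The main obstacle is the case where $G$ has few vertices or is nearly cubic, since there the counting gives too little slack. Here the exceptions $K_4$, the triangular prism and $R_8$ have to be identified by direct case analysis, checking that the small cubic near-bipartite bricks all have the required pair apart from these three. Because this analysis is lengthy and is done in the cited source, I would not redo it here.
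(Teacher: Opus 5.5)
Treating this theorem as a cited result is what the paper does. The paper gives no proof: it imports the statement from Lov\'asz and Vempala \cite{LV-Unpublished_MS} (Theorem 9.17 of \cite{Lucchesi2024}) and uses it only as a black box, in Lemma~\ref{lem:brick-cubic-re} and Corollary~\ref{cor:bicritical-wi-re}. Your preliminary observations are also correct: $e$ and $f$ lie one inside each colour class of $H=G-e-f$, and only their four ends can have degree $2$ in $H$.

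The self-contained plan, however, has a step that fails. You say that for $g\in E(H)$ with $H-g$ matching covered you would show, by alternating paths and bicriticality, that $e$ and $f$ stay allowed in $G-g$. This is false in general, as the triangular prism shows:
\begin{itemize}
\item Label its triangles $a_1a_2a_3$ and $b_1b_2b_3$, with rungs $a_ib_i$, and take $e=a_1a_2$, $f=b_1b_2$.
\item Then $H$ is bipartite with classes $\{a_1,a_2,b_3\}$ and $\{a_3,b_1,b_2\}$. It consists of the edge $a_3b_3$ together with the paths $a_3a_1b_1b_3$ and $a_3a_2b_2b_3$.
\item So $H-a_3b_3$ is a hexagon, and $a_3b_3$ is removable in $H$.
\item But in $G-a_3b_3$ the vertex $a_3$ is adjacent only to $a_1$ and $a_2$, so $e$ is not allowed there.
\end{itemize}

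You later hedge by restricting to edges whose deletion keeps $e$ and $f$ allowed. That condition is that $H-g-V(e)-V(f)$ still has a perfect matching, and it is the heart of the matter. Nothing in your forest-and-count step produces edges satisfying it.

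The count is also weak on its own terms. A forest away from at most four degree-$2$ vertices still permits up to $|V(G)|+3$ nonremovable edges, against $|E(H)|\ge\frac32|V(G)|-2$. That leaves only about $|V(G)|/2-5$ removable edges of $H$, with nothing said about which of them are compatible with $\{e,f\}$, or about nonadjacency.

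The compatibility issue arises at every order, and cubic near-bipartite bricks (for instance the prisms over odd cycles) form an infinite family. So it cannot be settled by the finite check on small cubic bricks that you propose at the end. The sketch is heuristic at best; the justification of the statement rests, as in the paper, on the citation.
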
 

 \begin{figure}[!h]
    \centering
    \includegraphics[totalheight=1.8cm]{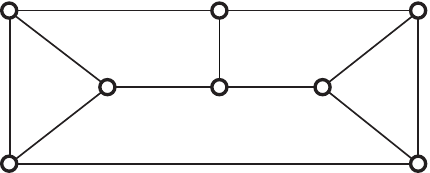}
    \caption{The graph $R_8$.}
    \label{fig:r8}
\end{figure}

We say that a brick $G$ is {\em wheel-like} if there exists a vertex $h$ of $G$, called its {\em hub}, such that for every removable class $R$ of $G$, $|R\cap \partial(h)|=1$.
\begin{lem}[\cite{HLX2025}]\label{lem:WL-brick}
     Let $G$ be a wheel-like brick with at least 6 vertices. Then every edge incident with the hub of $G$ is removable in $G$.
\end{lem}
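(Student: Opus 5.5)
The plan is a counting argument that forces every edge at the hub into a removable class, followed by a separate argument that rules out removable doubletons at the hub.

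\textbf{Step 1: counting classes.} Let $h$ be the hub of $G$. First observe that $G$ is neither $K_4$ (too few vertices) nor the triangular prism. For the prism, the only removable classes are the three doubletons formed by pairs of corresponding triangle edges. For any vertex $h$, the class containing the triangle edge opposite to $h$ does not meet $\partial(h)$, so the prism is not wheel-like. By Theorem \ref{thm:re_in_brick}, $G$ has at least $\Delta(G)$ removable classes. Distinct removable classes are disjoint, and by the wheel-like hypothesis each of them contains exactly one edge of $\partial(h)$. This gives an injection from the set of removable classes into $\partial(h)$. Hence
\[
\Delta(G)\le \#\{\text{removable classes}\}\le d(h)\le \Delta(G),
\]
so equality holds throughout. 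Consequently $d(h)=\Delta(G)$, and \emph{every} edge of $\partial(h)$ lies in exactly one removable class. It therefore remains to show that no edge $e=hv$ belongs to a removable doubleton.

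\textbf{Step 2: excluding a doubleton at the hub.} Suppose $\{e,f\}$ is a removable doubleton with $e\in\partial(h)$; by the wheel-like property $f\notin\partial(h)$. Since $G$ has a removable doubleton, it is near-bipartite, and $H=G-e-f$ is a matching covered bipartite graph $H[A,B]$. Here $e$ joins two vertices of one colour class, say $A$ (so $h\in A$), and $f$ joins two vertices of $B$. I would then look for a removable class of $G$ that avoids $\partial(h)$. This contradicts the wheel-like property, or equivalently the exact count from Step 1 (there are then $\Delta(G)$ classes, each meeting $\partial(h)$).

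The natural candidates are removable edges $g$ of $H$ with $g\notin\partial(h)$. Take such a $g$ and check that $G-g$ is matching covered. Every edge of $H-g$ is allowed in $H-g\subseteq G-g$. For $e$ and $f$, use that $H-g$ is matching covered and bipartite: a perfect matching of $G$ containing $e$ must also contain $f$ (parity across $A,B$). So it suffices to find an $A$–$B$ alternating path structure avoiding $g$, which Lemma \ref{lem:nonre-bi} and the $P$-set language should control.

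To guarantee that $H$ has enough removable edges away from $h$, I would use Corollary \ref{cor:nonre-at-most-2} when $\delta(H)\ge 3$, and Lemma \ref{lem:nonre-forest} for the global count. The nonremovable edges of $H$ form a forest, so $H$, which has many edges since $G$ is a brick with at least 6 vertices, has removable edges not all incident with $h$. When $\delta(H)=2$, the degree-2 vertices are ends of $e$ or $f$, since $\delta(G)\ge3$; handle these directly using the 3-connectivity of $G$.

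\textbf{Main obstacle.} The hard part is Step 2: showing that a removable edge $g$ of $H$ away from $h$ is genuinely removable in $G$, and not merely part of another doubleton or killed by the presence of $e,f$. This requires checking that $e$ and $f$ remain simultaneously allowed in $G-g$. Multiple edges also need care. I would first try to reduce to the simple case, then apply Theorem \ref{thm:sim-near-bi-brick}. That theorem gives two \emph{nonadjacent} removable edges, and two nonadjacent edges cannot both lie in $\partial(h)$, so one of them is a removable class avoiding $h$. This is the desired contradiction, after excluding $R_8$ by direct inspection.
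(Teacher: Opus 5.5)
The paper does not prove this lemma; it is imported from \cite{HLX2025}, so I can only judge your argument on its own terms. Step 1 is correct and is the right start. The disjointness of removable classes needs one line: if $\{e,f\}$ is a removable doubleton, then $f$ is the only edge not admissible in $G-e$, so $e$ lies in at most one doubleton, and removable edges lie in none.

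Step 2, as written, is not yet a proof, because the route you develop in most detail fails. A removable edge $g$ of $H=G-e-f$ need not be removable in $G$: $e$ and $f$ stay admissible in $G-g$ only if $H-g-V(e)-V(f)$ has a perfect matching. Take the triangular prism with triangles $a_1a_2a_3$ and $b_1b_2b_3$, and let $e=a_1a_2$, $f=b_1b_2$. Then $H$ is a hexagon plus the chord $a_3b_3$, and the chord is removable in $H$. But $\{a_1a_2,b_1b_2,a_3b_3\}$ is the only perfect matching of $G$ containing $e$, so $a_3b_3$ is not removable in $G$. Moreover, Lemma \ref{lem:nonre-forest} and Corollary \ref{cor:nonre-at-most-2} need $\delta(H)\ge 3$, which fails whenever an end of $e$ or $f$ is cubic in $G$. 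Discard this part rather than patch it.

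Your closing idea is the one that works, and together with Step 1 it proves the lemma. In fact it shows that a wheel-like brick on at least six vertices has no removable doubleton at all. It is the same device the paper uses in Lemma \ref{lem:brick-cubic-re} and Corollary \ref{cor:bicritical-wi-re}. However, the reduction you only announce has to be written out.

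(a) A multiple edge is always removable, so it is a removable class and must lie in $\partial(h)$. In particular $e$ and $f$ are simple.

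(b) The underlying simple graph $G'$ is again a brick. Moreover $G'-e-f$ is the underlying simple graph of the bipartite matching covered graph $G-e-f$, so it is bipartite and matching covered, and Theorem \ref{thm:sim-near-bi-brick} applies to $G'$.

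(c) Every removable edge of $G'$ is removable in $G$. Hence two nonadjacent removable edges of $G'$ give a removable edge of $G$ outside $\partial(h)$, a contradiction.

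(d) The exceptions must be handled for $G'$, allowing parallel edges at $h$ in $G$, not just for the simple prism. Show that for every vertex $h$ of $G'\in\{\overline{C_6},R_8\}$, some removable class $R$ of $G'$ avoids $\partial(h)$. By (a) its edges are simple in $G$, and $G-R$ is $G'-R$ plus parallel copies, hence matching covered. So $R$ contains a removable class of $G$ missing $\partial(h)$. For the prism, $R$ is your opposite-triangle doubleton. For $R_8$ it is the finite check you deferred, and it goes through: $R_8$ has a single removable edge, and both of its removable doubletons avoid the ends of that edge.

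With (a)--(d) in place the argument is complete.
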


\begin{lem}\label{lem:brick-cubic-re} 
    Let $G$ be a brick with the property that all removable edges of $G$ are incident with the same vertex $h$ and the underlying simple graph of $G$ has a removable doubleton. 
    If every 3-line in $G$ (if exists) is incident with $h$, then $|V(G)|=4$, and $G$ has multiple edges between $h$ and each of at least two of its neighbors. 
\begin{proof}
    Let $H$ denote the underlying simple graph of $G$.
    Note that every removable edge of $H$ is also removable in $G$.
    So all removable edges of $H$ are incident with $h$.
    Since $H$ is near-bipartite, $H$ is isomorphic to $K_4$, the triangular prism or $R_8$ by Theorem \ref{thm:sim-near-bi-brick}.
    So every edge of $H$ is a 3-line and there exists at least one 3-line of $H$ that is not incident with $h$.
    Thus, $G\neq H$ and hence $G$ has multiple edges.  As each multiple edge is removable, all multiple edges of $G$ are incident with $h$.

    If $H$ is isomorphic to the triangular prism or $R_8$, then there exist $v_1,v_2\in V(G)\setminus\{h\}$, such that $v_1v_2\in E(G)$ and $\{v_1,v_2\}\cap N_G(h)=\emptyset$.
    Recall that every multiple edge of $G$ belongs to $\partial_G(h)$ and every vertex of $H$ is of degree 3.
    Then $d_G(v_1)=3$ and $d_G(v_2)=3$. Thus, $v_1v_2$ is a 3-line of $G$ that is not incident with $h$, which contradicts the assumption.
    So we have $H\cong  K_4$, that is $|V(G)|=4$. As every 3-line of $G$ is incident with $h$, there exist $w_1,w_2\in N_G(h)$ such that $|E[\{h\},\{w_1\}]|\ge2$ and $|E[\{h\},\{w_2\}]|\ge2$. Therefore, the result holds.
  \end{proof}
\end{lem}

\begin{pro}[\cite{LP86}]\label{pro:2-sep-cut-of-bi}
    Let $G$ be a bicritical graph and let $C$ be a 2-separation cut of $G$. Then both $C$-contractions of $G$ are bicritical.
\end{pro}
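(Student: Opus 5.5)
The plan is to verify bicriticality of each $C$-contraction directly from the definition. For each pair of vertices of the contraction, I pick a suitable pair of vertices of $G$, take a perfect matching of $G$ minus that pair (it exists since $G$ is bicritical), and restrict it to the contraction.

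Setup. Let $\{u,v\}$ be the 2-separation and $G_1$ the chosen union of components of $G-\{u,v\}$. Let $G_2$ be the union of the remaining components, and put $X=V(G_1)\cup\{u\}$, so $C=\partial(X)$ and $\overline{X}=V(G_2)\cup\{v\}$. Since every component of $G-\{u,v\}$ is even, both $|V(G_1)|$ and $|V(G_2)|$ are even and nonzero. Hence $|X|$ and $|\overline{X}|$ are odd and at least $3$.

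Symmetry. The two contractions are $G/\overline{X}$ (with $\overline{X}$ shrunk to $y$) and $G/X$. The cut $\partial(X)=\partial(\overline{X})$ is also the 2-separation cut $\partial(V(G_2)\cup\{v\})$ obtained by interchanging the roles of $(G_1,u)$ and $(G_2,v)$. So it suffices to treat $G/\overline{X}$. It has $|X|+1\ge 4$ vertices and is connected, being a contraction of a connected graph.

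The structural fact I will use is the following. Because $G_1$ and $G_2$ lie in different components of $G-\{u,v\}$, every edge of $C$ is incident with $u$ or with $v$. Consequently a matching contains at most two edges of $C$: at most one at $u$ and at most one at $v$.

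Case 1: both chosen vertices $a,b$ lie in $X$. Let $M$ be a perfect matching of $G-\{a,b\}$. The set $X\setminus\{a,b\}$ has odd size, so $M$ has an odd number of edges in $C$. By the structural fact this number is at most $2$, so it is exactly $1$; call this edge $e$. Then $(M\cap E(G[X]))\cup\{e\}$, with $e$ now incident with $y$, is a perfect matching of $(G/\overline{X})-\{a,b\}$.

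Case 2: one chosen vertex is $a\in X$ and the other is $y$. Take a perfect matching $M$ of $G-\{a,v\}$. Since $v$ is deleted, every $M$-edge in $C$ is incident with $u$, so $M$ has at most one such edge. On the other hand $X\setminus\{a\}$ has even size, so $M$ has an even number of edges in $C$. Hence there are none, and $M\cap E(G[X])$ is a perfect matching of $(G/\overline{X})-\{a,y\}$.

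These two cases cover all pairs, so $G/\overline{X}$ is bicritical, and by the symmetry above so is $G/X$.

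The only delicate point is the parity-plus-counting argument pinning down the number of $M$-edges crossing $C$. It works because all cut edges meet $\{u,v\}$, and in Case 2 deleting $v$ leaves only edges at $u$. Multiple edges cause no trouble, since the argument only counts matching edges at the vertices $u$ and $v$.
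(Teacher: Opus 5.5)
Your proof is correct and complete. The paper gives no proof of this proposition; it is quoted from \cite{LP86}. So there is no internal argument to compare against, and your verification straight from the definition serves as a self-contained proof.

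Your two ingredients are the right ones.
\begin{itemize}
\item Every edge of $C=\partial(X)$ meets $\{u,v\}$, because $G_1$ and $G_2$ lie in different components of $G-\{u,v\}$. Hence a matching uses at most two edges of $C$.
\item For the deleted set $D$ there is the parity identity $|X\setminus D| = 2|M\cap E(G[X])| + |M\cap C|$. It forces $|M\cap C|=1$ in Case 1 and $|M\cap C|=0$ in Case 2. In Case 2 the decisive choice is to pair $a$ with $v$, so that only $u$ can carry a crossing edge.
\end{itemize}

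The case split covers all pairs of vertices of $G/\overline{X}$. The boundary situations ($a=u$, $b=u$, or $uv\in M$) are absorbed by the same counting. The symmetry reduction is legitimate, since $\partial(\overline{X})=\partial(X)$ is the 2-separation cut associated with $\{u,v\}$ with the roles of $(G_1,u)$ and $(G_2,v)$ interchanged. Your remark about connectivity is not needed for the definition of bicritical used in the paper, but it is harmless.

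A small observation your argument also yields: $G/\overline{X}$ is $G[V(G_1)\cup\{u,v\}]$ with $v$ renamed to the contracted vertex, plus parallel edges between $u$ and that vertex. Bicriticality does not depend on edge multiplicities. Therefore the same reasoning shows that $G[V(G_1)\cup\{u,v\}]+uv$ is bicritical.
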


\begin{pro}[\cite{HLX2025}]\label{pro:degree-2-sep}
    Let $S$ be a 2-separation of a bicritical graph $G$ and let $\partial(X)$ be a 2-separation cut of $G$ that is associated with $S$.
    Then for each $s\in S$, we have $|N(s)\cap (X\setminus S)|\ge2$ and $|N(s)\cap (\overline{X}\setminus S)|\ge2$.  
\end{pro}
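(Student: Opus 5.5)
Write $S=\{u,v\}$. By the definition of a 2-separation cut, the components of $G-S$ are split into two nonempty subgraphs $G_1$ and $G_2$, and (up to swapping the roles of $u$ and $v$, and of $X$ and $\overline{X}$) we have $X=V(G_1)\cup\{u\}$ and $\overline{X}=V(G_2)\cup\{v\}$. Then $X\setminus S=V(G_1)$ and $\overline{X}\setminus S=V(G_2)$. Since every component of $G-S$ is even, both $|V(G_1)|$ and $|V(G_2)|$ are even and positive. By symmetry it suffices to show that $u$ has at least two neighbours in $V(G_1)$. The other three inequalities follow in the same way, by exchanging $u$ with $v$ and/or $G_1$ with $G_2$.

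The plan is a direct parity argument against bicriticality. Suppose $|N(u)\cap V(G_1)|\le 1$. Choose a vertex $x\in V(G_1)$ with $N(u)\cap V(G_1)\subseteq\{x\}$: take $x$ to be the unique neighbour of $u$ in $G_1$ if there is one, and an arbitrary vertex of $G_1$ otherwise; this is possible because $G_1$ is nonempty. Now consider $G'=G-\{v,x\}$.

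First, $G_1-x$ has no edges leaving it in $G'$. Its vertices have no neighbours in $G_2$, because $G_1$ and $G_2$ lie in different components of $G-S$. The only neighbours of $V(G_1)$ outside $G_1$ are therefore $u$ and $v$. The vertex $v$ has been deleted, and $u$ is adjacent within $G_1$ only to $x$, which has also been deleted.

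Hence every component of $G_1-x$ is a component of $G'$. Since $|V(G_1)-x|$ is odd, at least one of these components is odd. So $G'$ has an odd component and no perfect matching. This contradicts the bicriticality of $G$, because $v\neq x$ are two distinct vertices and $|V(G)|\ge4$. Therefore $|N(u)\cap V(G_1)|\ge 2$.

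There is no real obstacle here. The only point needing care is the bookkeeping that $X\setminus S$ and $\overline{X}\setminus S$ are exactly $V(G_1)$ and $V(G_2)$, together with the observation that $G_1$ is attached to the rest of $G$ only through $u$ and $v$. Once that is in place, the single deletion of the pair $\{v,x\}$ already produces the odd component.
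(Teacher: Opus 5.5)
Your proof is correct and complete. If $u$ had at most one neighbour $x$ in $V(G_1)=X\setminus S$, then $V(G_1)\setminus\{x\}$ would be a nonempty union of components of $G-\{v,x\}$ with odd total order. That contradicts bicriticality, and the same argument covers the other three cases by symmetry.

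The paper gives no proof of this proposition; it quotes it from \cite{HLX2025}. Your parity argument is a valid self-contained proof. It can also be read as an application of Corollary~\ref{cor:2-vtx-cut-of-MC}: $\{v,x\}$ would be a vertex cut of the bicritical graph $G$ with an odd side, which that corollary forbids.
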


\begin{pro}\label{pro:bicritical-2-bricks}
     Let $\partial(X)$ be a 2-separation cut of a bicritical graph $G$ associated with the 2-separation $\{u,v\}$ such that $u\in X$. Then the following statements hold. \\
    {\rm (i)} Let $G_1:=G/(\overline{X}\to \overline{x})$. Then $e\in E(G_1)\setminus E_{G_1}[\{u\}, \{\overline{x}\}]$ is removable in $G_1$ if and only if $e$ is removable in $G$. \\
    {\rm (ii)} There exist 2-separation cuts $\partial(Y)$ and $\partial(Y')$ such that $Y'\subseteq\overline{Y}$, and
    both $G/ \overline{Y}$ and $G/ \overline{Y'}$ are bricks.
\begin{proof}
        (i) If $e$ is not incident with $\overline{x}$, then (i) holds by Lemma \ref{lem:re_also_re}.
        Now assume that $e$ is incident with $\overline{x}$.
        Note that $G_2:=G/(X\to x)$ has multiple edges between $v$ and $x$ by Proposition \ref{pro:degree-2-sep}.
        Since $e$ corresponds to a multiple edge between $v$ and $x$ in $G_2$, $e$ is removable in $G_2$. So (i) follows by Lemma \ref{lem:re_also_re} again.

         (ii) Both $\partial(X)$-contractions of $G$ are bicritical by Proposition \ref{pro:2-sep-cut-of-bi}, which have orders less than that of $G$.  
        If $G_1$ is 3-connected, it is a brick and let $Y:=X$. Otherwise, $G_1$ has a 2-separation cut $\partial_{G_1}(X_1)$ by Corollary \ref{cor:2-vtx-cut-of-MC} such that $\overline x\in \overline X_1=V(G_1)\setminus X_1$. Then $X_1\subset X\subset V(G)$. In this case, $\partial_{G_1}(X_1)=\partial_{G}(X_1)$. We can show that $\partial_{G}(X_1)$ is also a 2-separation cut of $G$. Since they have a same shore $X_1$, $G_1/(\overline{X_1}\to \overline{x_1})=G/(\overline{X_1}\to \overline{x_1})$ in the sense that $V(G_1)\setminus X_1$ and $V(G)\setminus X_1$ are contracted to the same vertex $\overline{x_1}$. By continuing the above procedure we finally arrive at a 2-separation cut $\partial (Y)$ of $G$ such that $Y\subset X$ and $G/\overline{Y}$ is a brick. For $G_2=G/(X\to x)$, similarly we have a 2-separation cut $\partial(Y')$ of $G$ such that $Y'\subseteq \overline X\subseteq \overline Y$ and $G/\overline{Y'}$ is a brick.  
\end{proof}
\end{pro}

 \begin{pro}[\cite{ZWY2022}, Corollary 3.7]\label{pro:WL-bicritical-no-re}
    Let $G$ be a bicritical graph without removable edges. Then $G$ has at least four cubic vertices. As a consequence, every bicritical graph with minimum degree at least 4 has a removable edge.
\end{pro}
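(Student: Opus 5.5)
We argue by induction on $|V(G)|$. In the base case $G$ is a brick, i.e.\ $G$ is $3$-connected. Any edge with a parallel copy is removable in a brick, because deleting one copy keeps the graph matching covered. Since $G$ has no removable edges, $G$ is therefore simple. By Theorem \ref{thm:re_in_brick} (Lov\'asz's theorem), a brick with no removable edge is $K_4$ or the triangular prism. Both are cubic with at least four vertices, so the claim holds.

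Now suppose $G$ is not $3$-connected. Since $G$ is bicritical, Corollary \ref{cor:2-vtx-cut-of-MC} gives a 2-separation. Proposition \ref{pro:bicritical-2-bricks}(ii) then provides 2-separation cuts $\partial(Y)$ and $\partial(Y')$ with $Y'\subseteq \overline{Y}$ such that $G_1:=G/(\overline{Y}\to\overline{y})$ and $G_2:=G/(\overline{Y'}\to \overline{y'})$ are bricks. Let $\{u,v\}$ be the 2-separation of $\partial(Y)$ with $u\in Y$.
\begin{itemize}
\item By Proposition \ref{pro:bicritical-2-bricks}(i), every removable edge of $G_1$ outside $E_{G_1}[\{u\},\{\overline{y}\}]$ would be removable in $G$. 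Hence all removable edges of $G_1$ lie in the bundle of parallel edges between $u$ and $\overline{y}$.
\item Every vertex of $Y\setminus\{u\}$ has the same degree in $G_1$ as in $G$, since only $\overline{Y}$ was contracted.
\item The same holds for $G_2$, with its own 2-separation vertex $u'\in Y'$.
\item The sets $Y\setminus\{u\}$ and $Y'\setminus\{u'\}$ are disjoint, because $Y'\subseteq\overline{Y}$.
\end{itemize}
So it suffices to prove the following key claim: a brick whose removable edges all join two fixed vertices $u$ and $\overline{y}$ has at least two cubic vertices outside $\{u,\overline{y}\}$. Applying it to $G_1$ and $G_2$ yields four distinct cubic vertices of $G$.

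To prove the key claim, let $H$ be obtained from $G_1$ by reducing the multiplicity of $u\overline{y}$ to one. Then $H$ is still a brick, and degrees of vertices other than $u,\overline{y}$ are unchanged. Any removable edge of $H$ must be the single edge $u\overline{y}$, since otherwise it would also be removable in $G_1$.
\begin{itemize}
\item If $H$ has no removable edge, then $H$ is $K_4$ or the triangular prism by Theorem \ref{thm:re_in_brick}. All its vertices are cubic and there are at least four of them, so at least two lie outside $\{u,\overline{y}\}$.
\item Otherwise $H$ has exactly one removable edge, so $H$ is neither $K_4$ nor the prism. Theorem \ref{thm:re_in_brick} gives $\Delta(H)-2\le 1$, so $\Delta(H)\le 3$. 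Since bricks are $3$-connected, $\delta(H)\ge 3$, so $H$ is cubic. Also $|V(H)|\ge 6$, as $K_4$ is the only brick on four vertices up to multiple edges. Hence at least four cubic vertices lie outside $\{u,\overline{y}\}$.
\end{itemize}

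The main obstacle is this key claim, specifically verifying that $H$ remains a brick and that no removable edge of $H$ other than $u\overline{y}$ can exist. Both follow from the facts that removing one copy of a parallel edge preserves bricks and that removability transfers as stated above.

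The consequence is immediate: a bicritical graph with $\delta(G)\ge 4$ has no cubic vertex, so it cannot be free of removable edges.
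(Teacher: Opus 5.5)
Your proof is correct. The paper only cites this result from \cite{ZWY2022}, so the natural comparison is its proof of Corollary~\ref{cor:bicritical-wi-re}, which produces two vertex-disjoint 3-lines and hence four cubic vertices. You use the same skeleton as that proof. Proposition~\ref{pro:bicritical-2-bricks}(ii) gives two end bricks $G/\overline{Y}$ and $G/\overline{Y'}$ with disjoint interiors, and Proposition~\ref{pro:bicritical-2-bricks}(i) confines all removable edges of each end brick to the bundle between $u$ and $\overline{y}$.

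You differ from the paper in how you identify that brick. The paper passes to the underlying simple graph, which has at most one removable edge. It combines Theorem~\ref{thm:re_in_brick} (at least $\Delta$ removable classes forces a removable doubleton, hence near-bipartiteness) with the Lov\'asz--Vempala Theorem~\ref{thm:sim-near-bi-brick} to conclude the graph is $K_4$, the prism or $R_8$. You use only the $\Delta-2$ removable-edge bound: the reduced brick $H$ is either $K_4$ or the prism, or has $\Delta(H)\le 3$ and is therefore cubic. This avoids the unpublished Lov\'asz--Vempala result entirely, and it is all the vertex count needs.

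The paper's explicit list is convenient for its stronger statement about 3-lines, but your conclusion recovers that too. A cubic $H$ has $3|V(H)|/2\ge 6$ edges, while at most $5$ of them meet $\{u,\overline{y}\}$ (since $u\overline{y}\in E(H)$). So some edge of $G[Y\setminus\{u\}]$ is a 3-line of $G$.

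Two minor remarks. First, the induction framing is vestigial: no inductive hypothesis is ever invoked, so the argument is really a direct case split. Second, the step $|V(H)|\ge 6$ in your second case is unnecessary, because any brick already has at least two vertices outside $\{u,\overline{y}\}$.
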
 
 The following property can be implied by the proof of Proposition \ref{pro:WL-bicritical-no-re}. A direct and shorter proof is given here.
\begin{cor} \label{cor:bicritical-wi-re}
    Let $G$ be a bicritical graph without removable edges. Then $G$ has at least two 3-lines and for any edge $f\in E(G)$, $G$ contains a 3-line that  is not adjacent to $f$. 
    \begin{proof} 
        As $G$ contains no removable edges, $G$ is simple.  
        If $G$ is 3-connected, then $G$ is a brick and hence $G$ is isomorphic to $K_4$ or the triangular prism by Theorem \ref{thm:re_in_brick}. So the result holds.
        Now assume that $G$ has a 2-separation by Corollary \ref{cor:2-vtx-cut-of-MC}.
        By Proposition \ref{pro:bicritical-2-bricks} (ii), there exist 2-separation cuts $\partial(X)$ and $\partial(X')$ of $G$ such that $X'\subseteq \overline{X}$, and both $G/(\overline{X}\to\overline{x})$ and $G/(\overline{X'}\to\overline{x'})$ are bricks.
        Let $G_1:=G/(\overline{X}\to\overline{x})$ and let $\{u,v\}$ be a 2-separation of $G$ associated with $\partial(X)$ such that $u\in X$.
        As $G$ is free of removable edges, all removable edges of $G_1$ must be between $\overline{x}$ and $u$ by   Proposition \ref{pro:bicritical-2-bricks} (i).
        So the underlying simple graph of $G_1$ has at most one removable edge.
        Then by Theorem \ref{thm:re_in_brick} and Lemma \ref{thm:sim-near-bi-brick}, the underlying simple graph of $G_1$ is isomorphic to $K_4$, the triangular prism or $R_8$.
        Thus, there exists a 3-line $e_1$ of $G_1$ that is not incident with $\overline{x}$ or $u$. Recalling that $G$ is simple, $e_1$ is a 3-line of $G$.
        Let $\{s,t\}$ be a 2-separation of $G$ associated with $\partial(X')$ such that $s\in X'$ (we allow the possibility of $\{s,t\}=\{u,v\}$).
        Let $G_2:=G/(\overline{X'}\to\overline{x'})$.
        Similarly, $G_2$ contains a 3-line $e_2$ that is not incident with $\overline{x'}$ or $s$, and this 3-line is also a 3-line of $G$.
        Recall that $e_1\in E(G[X\setminus\{u,v\}])$ and $e_2\in E(G[X'\setminus\{s,t\}])\subseteq E(G[\overline{X}\setminus\{u,v\}])$.  
        So no edge of $G$ is adjacent to both $e_1$ and $e_2$.
        Thus, the result holds. 
    \end{proof}
    \end{cor}

\begin{lem}\label{lem:WL-bicritical}
    Let $G$ be a bicritical graph with a removable edge. If all removable edges of $G$ are incident with a vertex $h$, then $h$ is incident with at least 3 removable edges or there exists a 3-line of $G$ that is not incident with $h$. 
   \end{lem}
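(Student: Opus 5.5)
We argue by contradiction. Suppose every removable edge of $G$ is incident with $h$, that $h$ is incident with at most two removable edges, and that every 3-line of $G$ is incident with $h$. We induct on $|V(G)|$, splitting according to whether $G$ is $3$-connected.

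\emph{Brick case.} If $G$ is $3$-connected, it is a brick, because it is bicritical. By Theorem~\ref{thm:re_in_brick}, $G$ has at least $\Delta(G)$ removable classes. Each removable edge lies in $\partial(h)$, so there are at most two of them.

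First suppose $G$ has a removable doubleton. Then $G$ is near-bipartite, and its underlying simple graph $H$ has a removable doubleton, or else $H$ is $K_4$ or the prism. In either situation we should be able to apply Lemma~\ref{lem:brick-cubic-re}. It gives $|V(G)|=4$, with multiple edges joining $h$ to at least two neighbours $w_1,w_2$. Every multiple edge is removable, and each such multiple edge contributes at least two removable edges. Hence $h$ is incident with at least $4>2$ removable edges, a contradiction.

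Otherwise all removable classes are single edges. There are at least $\Delta(G)\ge 3$ of them, all in $\partial(h)$, which again gives a contradiction.

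One edge case needs checking: $H\in\{K_4,\text{prism}\}$ with $G\neq H$. Here every extra parallel edge is removable and lies at $h$. An edge of $H$ that avoids $N(h)\cup\{h\}$ is then a 3-line of $G$. When $H=K_4$, the count of parallel edges again forces at least $3$ removable edges at $h$.

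\emph{Non-3-connected case.} If $G$ is not $3$-connected, Corollary~\ref{cor:2-vtx-cut-of-MC} gives a 2-separation. By Proposition~\ref{pro:bicritical-2-bricks}(ii) there are 2-separation cuts $\partial(Y)$ and $\partial(Y')$ with $Y'\subseteq\overline Y$, such that $G_1=G/\overline Y$ and $G_2=G/\overline{Y'}$ are bricks. Because the shores are disjoint, $h$ lies outside the interior $Y\setminus S$ of at least one of them; call it $G_1$, with 2-separation $\{u,v\}$, $u\in Y$.

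By Proposition~\ref{pro:bicritical-2-bricks}(i), every removable edge of $G_1$ other than those between $u$ and $\overline{y}$ is removable in $G$. Such edges must therefore be incident with $h$, and $h\in\{u,v\}$ or $h$ is outside $G_1$. So all removable edges of the brick $G_1$ are incident with one vertex: $\overline{y}$, or $h$ when $h\in\{u,v\}$.

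We now run the brick-case analysis on $G_1$, as in the proof of Corollary~\ref{cor:bicritical-wi-re}. Since $G_1$ is a brick with $\Delta(G_1)\ge 3$, Theorem~\ref{thm:re_in_brick} and Theorem~\ref{thm:sim-near-bi-brick} force the underlying simple graph of $G_1$ to be $K_4$, the prism or $R_8$. That yields a 3-line $e_1$ of $G_1$ avoiding $\overline y$ and $u$, and also $v$ where needed; such an edge exists in the prism and $R_8$, and in $K_4$ one edge avoids two chosen vertices. Its ends lie in $Y\setminus\{u,v\}$, where degrees agree with those in $G$, and multiple edges there would be removable edges not at $h$. Hence $e_1$ is a 3-line of $G$ not incident with $h$, a contradiction.

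\emph{Main obstacle.} The hardest step is controlling where the removable edges of $G_1$ concentrate: at $\overline y$ versus at $h$ when $h=u$ or $h=v$. This needs care with Proposition~\ref{pro:degree-2-sep} to ensure the chosen 3-line avoids $h$.
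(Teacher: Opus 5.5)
Your brick case is correct, and a little more direct than the paper's. When $G$ has no removable doubleton, Theorem~\ref{thm:re_in_brick} already gives $\Delta(G)\ge 3$ removable edges, all at $h$, so Lemma~\ref{lem:WL-brick} is not needed there.

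The gap is in the non-3-connected case, where you transplant the argument of Corollary~\ref{cor:bicritical-wi-re}. That argument works only because there $G$ has \emph{no} removable edges. The simple graph underlying $G_1$ then has at most one removable edge, so it is near-bipartite and Theorem~\ref{thm:sim-near-bi-brick} pins it down. Here $G$ may have up to two removable edges at $h$, and when $h=v$ they appear in $G_1$ as removable edges at $\overline{y}$. Two things then fail.
(a) Nothing you say rules out that $G_1$ has no removable doubleton at all. In that case Theorem~\ref{thm:sim-near-bi-brick} is unavailable; this is the wheel-like situation the paper treats with Lemma~\ref{lem:WL-brick}.
(b) Even when the underlying simple graph is $K_4$, parallel edges joining $\overline{y}$ to some $a\in Y\setminus\{u\}$ are parallel edges $av$ of $G$, i.e.\ removable edges \emph{at} $h$. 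So your claim that such multiple edges ``would be removable edges not at $h$'' is false, and they push $d(a)$ above $3$.

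Here is a concrete instance. Take vertices $u,a,b,v,c,d$ with edges $ua,ub,ab,bv,uc,ud,vc,vd,cd$ and two parallel edges $av$. This graph is bicritical, and its only removable edges are the two copies of $av$. With $h=v$, the shore $Y=\{u,a,b\}$ is admissible under your rule, and $G/\overline{Y}$ is a brick with underlying graph $K_4$. Your argument outputs $e_1=ab$, but $d(a)=4$. The 3-line avoiding $h$ is $cd$, on the other side of $\{u,v\}$.

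What is missing is the second half of the paper's proof. First, choose the shore with $h\notin Y$, which is possible because $Y\cap Y'=\emptyset$. Your weaker condition $h\notin Y\setminus\{u\}$ allows $h=u$, which creates the same difficulty with removable edges at $u$.

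Second, the brick $G/\overline{Y}$ does one of two things. Either it supplies a 3-line of $G$ avoiding $h$ (Lemma~\ref{lem:brick-cubic-re}). Or, via Lemma~\ref{lem:WL-brick} or the $|V|=4$ alternative of Lemma~\ref{lem:brick-cubic-re}, it supplies two removable edges $vw_1,vw_2$ of $G$ with $w_1,w_2\in Y\setminus\{u\}$. In the second case $h=v$ is forced, and the third removable edge at $v$, or a 3-line avoiding $v$, must be found in $G/(Y\to y)$.

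That is where induction enters. Delete all but two of the $y$--$v$ edges of $G/(Y\to y)$. The result is still bicritical, has a removable edge, and has all its removable edges at $v$, so you can apply the induction hypothesis with $v$ in the role of $h$.
\begin{itemize}
\item A 3-line avoiding $v$ also avoids $y$, because $d(y)\ge 4$ by Proposition~\ref{pro:degree-2-sep}.
\item A third removable edge at $v$ cannot be one of the two remaining $yv$ edges. So it joins $v$ to $\overline{Y}\setminus\{v\}$ and is removable in $G$.
\end{itemize}
Your proof announces an induction but never uses the induction hypothesis, and this is exactly where it is needed.
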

\begin{proof}
    We proceed by induction on $|V(G)|$.
    If $|V(G)|=4$, then the underlying simple graph of $G$ is isomorphic to $K_4$ and the result holds by Lemma \ref{lem:brick-cubic-re}. 
    Suppose that $|V(G)|\ge 6$ and that the result holds for all such graphs with fewer vertices than $G$. 
    If $G$ is 3-connected, then $G$ is a brick.
    If the underlying simple graph of $G$ is free of removable doubletons, then $G$ is a wheel-like brick with at least 6 vertices and hence $h$ is incident with at least 3 removable edges by Lemma \ref{lem:WL-brick}. 
    Otherwise, the result holds by Lemma \ref{lem:brick-cubic-re}.
    So we may assume that $G$ is not 3-connected.
    As $G$ is bicritical, $G$ has a 2-separation by Corollary \ref{cor:2-vtx-cut-of-MC}.
    By  Proposition \ref{pro:bicritical-2-bricks} (ii), there exists a 2-separation cut $\partial(X)$ of $G$, such that $h\notin X$ and $G/(\overline{X}\to\overline{x})$ is a brick. Let $H_1:=G/(\overline{X}\to \overline{x})$ and $H_2:=G/(X\to x)$. Let $\{u, v\}$ be a 2-separation of $G$ associated with 2-separation cut $\partial(X)$  such that $u \in X$ and $v \in \overline{X}$  (see Figure \ref{fig:4-9}).  
    \begin{figure}[h!]
    \centering
    \subfigure{
    \begin{minipage}{5cm}
    \centering
    \includegraphics[totalheight=3.5cm]{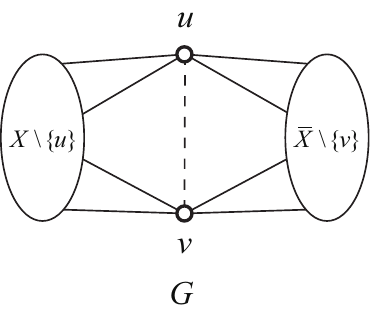}
    \end{minipage}
    }
    \subfigure{
    \begin{minipage}{4.5cm}
    \centering
    \includegraphics[totalheight=3.5cm]{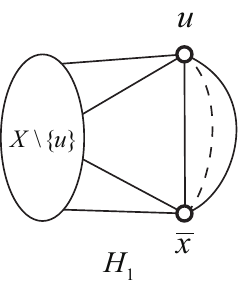}
    \end{minipage}
    }
    \subfigure{
    \begin{minipage}{4.5cm}
    \centering
    \includegraphics[totalheight=3.5cm]{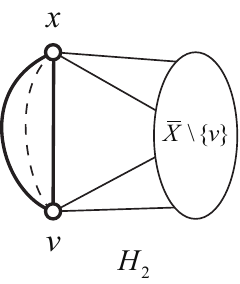}
    \end{minipage}
    }
    \caption{Illustration for the proof of Lemma \ref{lem:WL-bicritical}, where dashed lines indicate the possibility of an edge.} 
    \label{fig:4-9}
    \end{figure}

    Since $h\in \overline{X}$, all removable edges of $H_1$ are incident with $\overline{x}$ in $H_1$ by Lemma \ref{lem:re_also_re}. 
    If the underlying simple graph of $H_1$ is free of removable doubletons, then $H_1$ has no removable doubletons. So $H_1$ is a wheel-like brick. Since $K_4$, the only brick of order 4, has a removable doubleton, we have $|V(H_1)|\ge6$. By Lemma \ref{lem:WL-brick}, every edge incident with $\overline{x}$ is removable in $H_1$. 
    Since $H_1$ is 3-connected, we have $|N_{H_1}(\overline{x})\cap (X\setminus\{u\})|\geq 2$. Let $\{w_1,w_2\}\subset N_{H_1}(\overline{x})\cap (X\setminus\{u\})$. 
    As both $\overline{x}w_1$ and $\overline{x}w_2$ are removable in $H_1$, by Proposition \ref{pro:bicritical-2-bricks} (i),  $vw_1$ and $vw_2$ are removable in $G$. 
    Thus, $h=v$. 
    By Proposition \ref{pro:bicritical-2-bricks} (i) again, every removable edge of $H_2$ is incident with $v$.
    Let $H'_2$ be the graph obtained from $H_2$ by deleting all edges between $x$ and $v$ except exactly two (note that there exist multiple edges between $x$ and $v$ by Proposition \ref{pro:degree-2-sep}).
    Then $H'_2$ is a bicritical graph with a removable edge, such that $4\le|V(H'_2)|<|V(G)|$ and all removable edges are incident with  the  vertex $v$.
    By inductive hypothesis, $H'_2$ contains a 3-line that is not incident with $v$ or $v$ is incident with at least 3 removable edges in $H'_2$.
    For the former case, let $f_1$ be a 3-line of $H'_2$ that is not incident with $v$. By Proposition \ref{pro:degree-2-sep}, $d_{H'_2}(x)\ge4$ and hence $f_1$ is not incident with $x$. Thus, $f_1$ is a 3-line of $G$ that is not incident with $h$ and so, the result holds.
    For the latter case, $H'_2$ contains a removable edge $f_2$ between $v$ and a vertex in $\overline{X}\setminus\{v\}$. 
    Obviously $f_2$ is removable in $H_2$. Moreover,  by Lemma \ref{lem:re_also_re},  $f_2$ is removable in $G$.
   Therefore, $f_1$, $vw_1$ and $vw_2$ are three removable edges of $G$ incident with $v$. 
   So the lemma holds.  

    Now assume that the underlying simple graph of $H_1$ has a removable doubleton. 
    The  lemma  holds if $H_1$ contains a 3-line that is not incident with $\overline{x}$.
    So we assume that every 3-line in $H_1$ (if exists) is incident with $\overline{x}$.
    Then by Lemma \ref{lem:brick-cubic-re}, $|V(H_1)|=4$, and $H_1$ has multiple edges between $\overline{x}$ and at least two vertices in $N_{H_1}(\overline{x})$. So $H_1$ contains at least 2 removable edges between $\overline{x}$ and a vertex in $X\setminus\{u\}$.
    By Proposition \ref{pro:bicritical-2-bricks} (i),
    all removable edges of $H_1$ between $\overline{x}$ and a vertex in $X\setminus\{u\}$ are also removable in $G$.  
    So $G$ has at least 2 removable edges between $v$ and a vertex in $X\setminus\{u\}$. Recall that $h\in\overline{X}$. Thus, $h=v$. 
    The lemma follows by the same argument used in the case when the underlying simple graph of $H_1$ is free of removable doubletons.
   \end{proof}

\section{Proofs of the main results} 
For a matching covered nonbipartite graph $G$ with a nontrivial barrier $B$, let $H(G,B)$ denote the graph obtained from $G$ by contracting every nontrivial odd component of $G-B$ to a singleton. 
 By Lemma \ref{lem:C-contractions-MC}, $H(G,B)$ is also matching covered. Note that $H(G,B)$ is a bipartite graph with $B$ as one partite set, as $B$ is an independent set and $G-B$ has no even components by Corollary \ref{cor:M-C-without-even-components}.
 Let $I$ be the other partite set of $H(G,B)$.  
Let $U_{H(G,B)}:=\{u\in I: u~ \text{is incident with at least 3 edges and at most 2 nonremovable edges in}~ H(G,B)\}$.  
When no confusion arises, we assign the same label to any vertex (or edge) common to both $G$ and  $H(G,B)$.    
Note that a vertex of $H(G,B)$ but not in $G$ must be gotten by contracting a nontrivial odd component of $G-B$. 
\vspace{1em}

 \noindent{\em Proof of Theorem \ref{thm:main-thm}.} 
    We proceed by induction on $|V(G)|$.
     By Theorem \ref{thm:2-lines}, $G$ is nonbipartite and simple as $\delta(G)\ge3$ and $G$ is minimal.  
    Thus, if $|V(G)|=4$, then $G\cong K_4$ and hence the result holds. 
    Now suppose that $|V(G)|\ge 6$ and that the result holds for all such graphs with fewer vertices than $G$. 
    
    If $G$ is free of nontrivial barriers, then $G$ is bicritical by Proposition \ref{pro:mc_is_Bi} and hence, the result holds by Corollary \ref{cor:bicritical-wi-re}.  
    From now on, we assume that $G$ has a nontrivial barrier.\\
 
    \noindent {\bf Case 1.} There exists a nontrivial barrier $B$ of $G$ such that $\delta(H(G,B))=2$.

    Take a vertex $q$ of $H(G,B)$ with degree $2$. 
    Since $\delta(G)\geq 3$, $q$ is obtained by contracting a nontrivial odd component $Q$ of $G-B$ to a singleton. 
    Let $H:=G/(\overline{V(Q)}\to\overline{q})$. Then $H$ is  a matching covered  and 2-connected graph.  
    Since $d_H(\overline{q})=d_{H(G,B)}(q)=2$, every edge incident with $\overline{q}$ is not removable in $H$. Moreover, by Lemma \ref{lem:re_also_re}, $H$ has no removable edges.
    Let $N_{H}(\overline{q}):=\{k_1,k_2\}$. 
    Note that $\{k_1,k_2\}$ is a barrier of $H$, 
    as $\overline{q}$ is the vertex of a trivial odd component of $H-\{k_1,k_2\}$. 
    Thus, by Corollary \ref{cor:M-C-without-even-components}, $k_1k_2\notin E(H)$.
    Then for each $i\in\{1,2\}$, we have $|N_H(k_i)\cap V(Q)|\ge2$, as $H$ is simple and $d_H(k_i)\ge3$ (as $V(H)\setminus\{\overline{q}\}\subseteq V(G)$).
    So $|V(Q)|\ge5$ (note that $Q$ is odd).
    Let $X :=\{\overline{q},k_1,k_2\}$. 
    Since $|V(H)|\ge6$, $\partial(X)$ is a nontrivial barrier cut of $H$.
    Let $H_1:=H/(\overline{X}\to\overline{x})$ and $H_2:=H/(X\to x)$. Then both $H_1$ and $H_2$ are matching covered.
    Since $H$ is free of removable edges and every edge incident with $\overline{x}$ is removable in $H_1$, by Lemma \ref{lem:re_also_re}, $H_2$ contains no removable edges.
    Note that $V(H_2)\setminus\{x\}\subseteq V(G)$. 
    Thus, $H_2$ is a minimal matching covered graph with $4\le |V(H_2)|<|V(G)|$ and $\delta(H_2)\ge3$. 
    By inductive hypothesis, there exist two nonadjacent 3-lines, say $e_1$ and $e_2$ of  $H_2$. 
    Since $|\partial(X)|\ge4$ (as $|N_G(k_i)\cap V(Q)|\ge2$, for each $i\in\{1,2\}$), we have $d_{H_2}(x)\ge4$. 
    Then $e_1$ and $e_2$ are nonadjacent 3-lines of $H$ and hence also in $G$ (note that $d_H(\overline{q})=2$). So the result holds for this case. \\

    \noindent {\bf Case 2.} For every nontrivial barrier  $B$  of $G$, $\delta(H(G,B))\ge3$. 

    We have the following claims for this case.
    \begin{claim}[1]
        For every nontrivial barrier $B$ of $G$, we have $|U_{H(G,B)}|\ge2$ and $V(G)\cap U_{H(G,B)}=\emptyset$. 
        \begin{proof}
            Since $\delta(H(G,B))\ge3$, by Corollary \ref{cor:nonre-at-most-2}, we have $|U_{H(G,B)}|\ge2$. 
            Suppose, to the contrary, that there exists a vertex $u\in V(G)\cap U_{H(G,B)}$. 
            Since every vertex of $U_{H(G,B)}$ is incident with some removable edge of  $H(G,B)$ (as $\delta(H(G,B))\ge3$), we may assume that $uv$ is removable in $H(G,B)$, where $v\in B$.  
            Then by Lemma \ref{lem:re_also_re}, $uv$ is a removable edge of $G$, as $u,v\in V(G)$.
            It contradicts the assumption that $G$ has no removable edges.
            So the claim holds. 
        \end{proof}
    \end{claim}

    Let $B_0$ be a maximal nontrivial barrier of $G$.
By Claim 1, $|U_{H(G,B_0)}|\geq 2$. So we take two distinct vertices  $q_0$ and $q_0'$ in $ U_{H(G,B_0)}$, which are obtained by contracting 
      nontrivial odd components $Q_0$ and $Q_0'$ of $G-B_0$ respectively.
    Let $G_1:=G/(\overline{V(Q_0)}\to\overline{q_0})$ and $G_1':=G/(\overline{V(Q_0')}\to\overline{q_0'})$. 
By Lemma \ref{lem:max-barrier-critical}, $Q_0$ and $Q_0'$ are nonbipartite, so $G_1$ and $G'$ are  nonbipartite matching covered graphs by Lemma \ref{lem:C-contractions-MC}. Next it suffices to show that $G_1$ (resp.  $G'_1$) has a 3-line not incident with $\overline{q_0}$ (resp. $\overline{q_0'}$) (obviously, the 3-line in $G_1$ and the one in $G'_1$ are disjoint 3-lines in $G$). We only consider $G_1$, the case of $G'_1$ is the same.

    \begin{claim}[2] If $G_1$ is not bicritical, let $B_1$ be a maximal nontrivial barrier of $G_1$.
        Then  $\overline{q_0}\notin B_1$, $B_1$ is also a barrier of $G$ disjoint $B_0$ and $H(G_1,B_1)=H(G,B_1)$. 
        \begin{proof}
           If $\overline{q_0}\in B_1$, then $B_0\cup (B_1\setminus\{\overline{q_0}\})$ is a barrier of $G$. 
            But $B_0\subset  B_0\cup (B_1\setminus\{\overline{q_0}\})$, which contradicts the maximality of $B_0$.  Hence $\overline{q_0}\notin B_1$.
            Further  $\overline{q_0}$ is contained in an odd component of $G_1-B_1$.
            It can be checked that $B_1$ is a nontrivial barrier of $G$ and $H(G,B_1)=H(G_1,B_1)$.
            \end{proof}
            \end{claim} 
            Let $Q'_1$ be the component of $G_1-B_1$  such that $\overline{q_0}\in V(Q'_1)$ (we allow the possibility of $V(Q'_1)=\{\overline{q_0}\}$). 
            Let $q'_1$ be the vertex gotten by contracting $Q'_1$ to a singleton.  By Claim 2, $U_{H(G_1,B_1)}=U_{H(G,B_1)}$. Hence $|U_{H(G_1,B_1)}|\ge2$  by Claim 1.
            Then there exists a vertex $q_1\in U_{H(G_1,B_1)}$ such that $q_1\neq q'_1$.
            By Claim 1 again, $q_1$ is obtained by contracting a nontrivial odd component $Q_1$ of $G_1-B_1$, as $U_{H(G_1,B_1)}=U_{H(G,B_1)}$.
            Since $V(G_1)\setminus\{\overline{q_0}\}\subseteq V(G)$, we have $V(Q_1)\subset V(G)$ (see Figure \ref{fig:1-1}). 
       
    \begin{figure}[h!]
    \centering
    \subfigure{
    \begin{minipage}{6cm}
    \centering
    \includegraphics[totalheight=3.5cm]{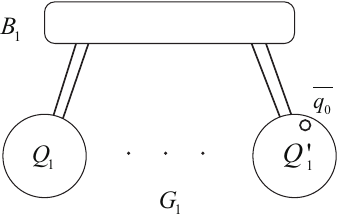}
    \end{minipage}
    }
    \subfigure{
    \begin{minipage}{6cm}
    \centering
    \includegraphics[totalheight=3.5cm]{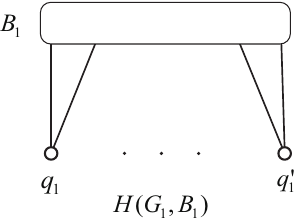}
    \end{minipage}
    }
    \subfigure{
    \begin{minipage}{2.5cm}
    \centering
    \includegraphics[totalheight=3.5cm]{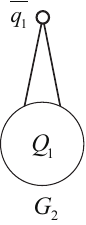}
    \end{minipage}
    }
    \caption{Illustration for the proof of Theorem \ref{thm:main-thm}.}
    \label{fig:1-1}
    \end{figure} 
    Let $G_2:=G_1/(\overline{V(Q_1)}\to\overline{q_1})$. Then $G_2$ is also a nonbipartite matching covered graph.  
    If $G_2$ is not bicritical,  let $B_2$ be a maximal nontrivial barrier of $G_2$. Similar to  Claim 2,   by the maximality of $B_1$ in $G_1$ we have that $\overline{q_1}\notin B_2$, $B_2$ is also a nontrivial barrier of $G$ and $H(G,B_2)=H(G_1,B_2)=H(G_2,B_2)$. Similarly let $Q_2$ and $Q_2'$ be  distinct odd components of $G_2-B_2$ contracted to singletons $q_2$ and $q_2'$ in $H(G_2,B_2)$ such that   $ q_2\in U_{H(G_2,B_2)}$ and $\overline q_1\in V(Q_2')$. Further $Q_2\subset Q_1\subset G$.

    For this time let $G_3:=G_2/(\overline{V(Q_2)}\to\overline{q_2})$. 
    By continuing above procedure, since $G$ (write $G_0$)  is finite we finally obtain a nonbipartite matching covered graph $G_s$, $s\geq 0$,  satisfying the following: \\
    \indent  (i) For $s\ge 1$, $V(G_s)\setminus\{\overline{q_{s-1}}\}\subset V(G)$ and $G_s$ has a maximal nontrivial barrier $B_s$ such that $\overline{q_{s-1}}\notin B_s$; and\\
     \indent (ii) there exists a nontrivial odd component $Q_s$ of $G_s-B_s$, such that $G_{s+1}:=G_s/(\overline{V(Q_s)}\to\overline{q_s})$ has no nontrivial barriers, $V(Q_s)\subset V(G)$ and $q_s\in U_{H(G_s,B_s)}=U_{H(G,B_s)}$, where $q_s$ is obtained by contracting the odd component $Q_s$ of $G_s-B_s$.
     
Since  $G':=G_{s+1}$ has no nontrivial barriers, by Proposition \ref{pro:mc_is_Bi}, $G'$ is bicritical.
    If $G'$ is free of removable edges, then by Corollary \ref{cor:bicritical-wi-re}, $G'$ contains at least two nonadjacent 3-lines  and hence there exists a 3-line $f_1$ of $G'$ that is not incident with $\overline{q_s}$.
    Since $V(G')\setminus\{\overline{q_s}\}\subseteq V(Q_0)\subseteq V(G)$, $f_1$ is a 3-line of $G$ that lies in  $Q_0$.
    Now assume that $G'$ contains a removable edge.
    Then by Lemma \ref{lem:re_also_re}, all removable edges of $G'$ are incident with $\overline{q_s}$, as $G$ is free of removable edges and $V(G')\setminus\{\overline{q_s}\}\subseteq V(G)$.
    Recall that $q_s\in U_{H(G_s,B_s)}$, that is, $q_s$ is incident with at most two nonremovable edges and at least one removable edge in $H(G_s,B_s)$.
    Then by Lemma \ref{lem:re_also_re}, $q_s$ is incident with at most two nonremovable edges and at least one removable edge in $G_s/(V(Q_s)\to q_s)$ as well.
    Hence, by Lemma \ref{lem:re_also_re} again, oppositely  $\overline{q_s}$ is incident with at most two removable edges of $G'$ and at least one nonremovable edge.
    Thus, by Lemma \ref{lem:WL-bicritical}, there exists a 3-line  of $G'$ that is not incident with $\overline{q_s}$, which lies in $Q_0$. 
    \hfill $\square$ \\

By some slight changes to the proof of Theorem \ref{thm:main-thm}, we can derive the following result.
\begin{cor}\label{cor:M-C-only-1-RE}
    Let $G$ be a matching covered graph with $\delta(G)\ge3$ and at most one removable edge, and let $e$ denote  the only removable edge of $G$ if it exists; otherwise, let $e$ be any edge of $G$. Then $G$ contains a 3-line that is not adjacent to $e$. 
\end{cor}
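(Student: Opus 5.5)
We follow the proof of Theorem \ref{thm:main-thm} almost verbatim, tracking the edge $e$ throughout. If $G$ has no removable edge, then $G$ is minimal, and Theorem \ref{thm:main-thm} gives two nonadjacent 3-lines $e_1,e_2$. Since $e$ is a single edge, it cannot be adjacent to both of them unless $e$ joins an end of $e_1$ to an end of $e_2$. So the conclusion is not immediate. Instead we redo the argument and use the fact that it produces 3-lines in disjoint regions of $G$.

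\textbf{Bicritical case.} If $G$ is bicritical and has no removable edge, Corollary \ref{cor:bicritical-wi-re} gives the result directly for any $e$. If $G$ is bicritical with exactly one removable edge $e=hh'$, all removable edges are incident with $h$, and $h$ meets fewer than 3 removable edges. Lemma \ref{lem:WL-bicritical} then yields a 3-line $f$ not incident with $h$. By symmetry (take $h'$ as the hub) there is also a 3-line $f'$ not incident with $h'$. The difficulty is that we need a single 3-line avoiding both ends of $e$. I would handle this by re-examining the induction in Lemma \ref{lem:WL-bicritical}. Its 3-lines come from bricks $H_1$ (or from $K_4$, the prism or $R_8$ via Lemma \ref{lem:brick-cubic-re}) away from the 2-separation. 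One can then pick $f$ in a brick piece not containing $e$, as in the proof of Corollary \ref{cor:bicritical-wi-re}, which uses two ends of the tight cut decomposition. Alternatively, if $f$ is incident with $h'$, then $d(h')=3$ and we may try using $f'$. We then argue that the case where every 3-line meets $e$ forces a small configuration contradicting $\delta\ge3$ or the fact that $e$ is removable.

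\textbf{Nonbipartite case with barriers.} Otherwise $G$ has a nontrivial barrier $B$; note $G$ is nonbipartite by Theorem \ref{thm:2-lines} only when $e$ does not exist, so the bipartite case must be handled separately. In Case 1 of the main proof ($\delta(H(G,B))=2$), the vertex $\overline q$ has degree 2, so no removable edge is incident with it. The single removable edge $e$ then lies on one side of the barrier cut $\partial(X)$. We apply induction to $H_2$, which has at most one removable edge, namely $e$ if $e$ lies there. This gives a 3-line avoiding $e$, and it has degree-3 ends in $G$ as before. In Case 2 the main proof produces two 3-lines inside two distinct components $Q_0,Q_0'$ of $G-B_0$. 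These are vertex-disjoint regions, and an edge $e$ meets at most one of $Q_0,Q_0'$ together with $B_0$. The edges of $H(G,B_0)$ that are removable are now counted with one exception, so Claim 1 and Corollary \ref{cor:nonre-at-most-2} still give $|U|\ge2$ provided we discard the at most one vertex of $V(G)\cap I$ incident with $e$. We then choose the component $Q_0$ not containing an end of $e$ and run the descent $G_1,G_2,\dots$ inside it. Each resulting graph is then free of removable edges away from the contraction vertex, exactly as before, and yields a 3-line inside $Q_0$, hence not adjacent to $e$.

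\textbf{Bipartite case and main obstacle.} If $G$ is bipartite with $\delta\ge3$, Lemma \ref{lem:nonre-forest} shows that the nonremovable edges form a forest. Hence $G$ has many removable edges once $|V(G)|\ge4$. Concretely, $|E|\ge 3|V|/2 > |V|-1+1$, so $G$ has at least two removable edges, and this case is vacuous. The main obstacle is the bicritical case with exactly one removable edge: we must upgrade Lemma \ref{lem:WL-bicritical} to a 3-line avoiding both ends of $e$. I expect this to require reproving that lemma's induction with the stronger hypothesis ``all removable edges lie in $\{e\}$''. One would invoke Proposition \ref{pro:bicritical-2-bricks}(ii) to place $e$ on one side of a 2-separation, then extract the 3-line from a brick on the other side.
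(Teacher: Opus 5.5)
Your plan is the paper's own. It runs induction on $|V(G)|$ and excludes the bipartite case; the paper uses Corollary~\ref{cor:nonre-at-most-2} there, and your forest count works equally well. It then treats the bicritical case, and in the barrier case reruns Case~1 through $H_2$ and Case~2 after choosing $Q_0$ to contain no end of $e$. The descent still ends with Lemma~\ref{lem:WL-bicritical} at $\overline{q_s}$, where only avoidance of $\overline{q_s}$ is needed.

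Both of your caveats are justified. The paper dismisses the removable-edge-free case by citing Theorem~\ref{thm:main-thm}, whose statement alone does not control an arbitrary $e$. Your rerun does: in Case~2 the two 3-lines lie in different components of $G-B_0$, and Case~1 goes through the strengthened induction. The paper dismisses the bicritical case by citing Lemma~\ref{lem:WL-bicritical}, which only gives a 3-line missing one prescribed end of $e$.

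That bicritical step is the one place where your proposal is still incomplete, but it closes without reworking any induction.

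\textbf{Brick case.} If $G$ is 3-connected, it is a brick with a removable edge, hence neither $K_4$ nor the prism. Theorem~\ref{thm:re_in_brick} then gives $\Delta(G)\le 3$, so $G$ is cubic. Since $|V(G)|\ge 6$, some edge is not adjacent to $e$, and it is a 3-line.

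\textbf{2-separation case.} Otherwise take $\partial(Y)$ and $\partial(Y')$ with $Y'\subseteq\overline{Y}$ from Proposition~\ref{pro:bicritical-2-bricks}(ii), with associated 2-separations $\{u,v\}$ and $\{s,t\}$, where $u\in Y$ and $s\in Y'$.
\begin{itemize}
\item No edge joins $Y\setminus\{u\}$ to $Y'\setminus\{s\}$. Such an edge would have to end at $v$, with $v\in Y'\setminus\{s\}$. That forces $N(v)\subseteq Y'\cup\{t\}$. But by Proposition~\ref{pro:degree-2-sep}, $v$ has at least two neighbours in $Y\setminus\{u\}\subseteq\overline{Y'}$, which cannot all equal $t$.
\item Hence $e$ has no end in one of these two sets; say it has none in $Y'\setminus\{s\}$ (the other case is identical, using $G/\overline{Y}$).
\item The edges of $G/\overline{Y'}$ other than those joining $s$ to the contraction vertex are exactly the edges of $G$ with an end in $Y'\setminus\{s\}$. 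So by Proposition~\ref{pro:bicritical-2-bricks}(i), every removable edge of $G/\overline{Y'}$ joins $s$ to the contraction vertex.
\item The argument of Corollary~\ref{cor:bicritical-wi-re} then gives a 3-line of $G$ with both ends in $Y'\setminus\{s\}$, which is not adjacent to $e$.
\end{itemize}
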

\begin{proof}
    We proceed by induction on $|V(G)|$.
    By Corollary \ref{cor:nonre-at-most-2}, $G$ is nonbipartite. 
    For $|V(G)|=4$, $K_4$ is a unique such graph.  Obviously, it has a 3-line and no removable edges, and thus the result holds.  
    So we assume that $|V(G)|\ge6$ and that the result holds for all such graphs with fewer  vertices than $G$. 
    By Theorem \ref{thm:main-thm}, the result holds if $G$ is free of removable edges. So we assume that $G$ has just one removable edge.
    If $G$ is free of nontrivial barriers, then  $G$ is bicritical by Proposition \ref{pro:mc_is_Bi}, and hence  the result holds by Lemma \ref{lem:WL-bicritical}.
    So we assume that $G$ has a nontrivial barrier.

    If there exists a nontrivial barrier $B$ of $G$, such that $\delta(H(G, B)) = 2$, then by the analogous  argument as  Case 1 of the proof of Theorem \ref{thm:main-thm}, we can show that $G$ contains a 3-line that is nonadjacent to the removable edge.
    Now assume that for every nontrivial barrier $B'$ of $G$, $\delta(H(G, B'))\ge 3$. 
    Let $B_0$ be a maximal nontrivial barrier of $G$. By Corollary \ref{cor:nonre-at-most-2}, we have $|U_{H(G,B_0)}|\ge2$.
    If there exists a vertex $w\in U_{H(G,B_0)}\cap V(G)$, then $w$ is incident with a removable edge $e$ of $H(G,B_0)$ and so, by Lemma \ref{lem:re_also_re}, $e$ is removable in $G$.
    Since $G$ has exactly one removable edge, we have $\{w\}=U_{H(G,B_0)}\cap V(G)$.
     Then for each $q\in U_{H(G,B_0)}\setminus\{w\}$, $q$ is obtained by contracting a nontrivial odd component $Q$  of $G-B_0$. Moreover, every vertex of $Q$ is not incident with a removable edge of $G$.  
    If $U_{H(G,B_0)}\cap V(G)=\emptyset$, then every vertex of $U_{H(G,B_0)}$ is obtained by contracting a nontrivial odd component of $G-B_0$. 
    So, whether $U_{H(G,B_0)}\cap V(G)$ is $\emptyset$ or not,  we can choose a nontrivial odd component $Q_0$ of $G-B_0$, such that $q_0\in U_{H(G,B_0)}$ and no vertex of $Q_0$ is incident with a removable edge of $G$,  where $q_0$ is the vertex obtained by contracting $Q_0$.
    By a similar argument as Case 2 of the proof of Theorem \ref{thm:main-thm}, we can show that $G$ contains a 3-line with both end vertices lying in $Q_0$.
\end{proof}
\vspace{1em}
\noindent{\em Proof of Theorem \ref{thm:2-or-3-lines}.} 
    We proceed by induction on $|V(G)|$. 
    If $|V(G)|=4$, then $G$ is isomorphic to $K_4$ or a cycle with 4 vertices. Thus, the result holds. 
    Now suppose that $|V(G)|\ge 6$ and that the result holds for all such graphs with fewer vertices than $G$. 
    If $\delta(G)\ge3$, then by Theorem \ref{thm:main-thm}, the result holds.
    So we consider the case when $\delta(G)=2$. 
    Assume first that $G$ has a vertex $u$ of degree 2 such that $d(u_1)=d(u_2)=2$, or $d(u_1)\geq 3$ and $d(u_2)\geq 3$, where $\{u_1,u_2\}=N(u)$.
    Note that $\{u_1,u_2\}$ is a barrier of $G$ and  $u_1u_2\notin E(G)$ by Corollary \ref{cor:M-C-without-even-components}.
    Let $Y:=\{u,u_1,u_2\}$. Since $|V(G)|\ge6$, we have $|\overline{Y}|\ge3$ and hence $\partial_G(Y)$ is a nontrivial barrier cut of $G$.
    Let $G_1:=G/(Y\to y)$ and $G_2:=G/(\overline{Y}\to \overline{y})$. 
    
    If $d_G(u_1)=d_G(u_2)=2$, then $|\partial_G(Y)|=2$ and hence $d_{G_1}(y)=2$.
    Thus, every edge incident with $y$ is not removable in $G_1$.
    By Lemma \ref{lem:re_also_re}, $G_1$ is free of removable edges, as $G$ is minimal.
    If $d_G(u_1)\ge3$ and $d_G(u_2)\ge3$, then $|E[\{u_1\},\{\overline{y}\}]|\ge2$ and $|E[\{u_2\},\{\overline{y}\}]|\ge2$.
    Thus, every edge incident with $\overline{y}$ is removable in $G_2$ and then, by Lemma  \ref{lem:re_also_re}, every edge incident with $y$ is not removable in $G_1$, as $G$ is minimal.
    Therefore, in both cases, $G_1$ is a minimal matching covered graph.
    
    Since $4\le|V(G_1)|<|V(G)|$, by inductive hypothesis, $G_1$ contains two nonadjacent edges, say $e_1$ and $e_2$, such that each of $e_1$ and $e_2$ is a 2-line or 3-line.
    Then at least one of $e_1$ and $e_2$, say $e_1$, is not incident with $y$.
    So $e_1$ is also a 2-line or 3-line in $G$.  
    If $d_G(u_1)=d_G(u_2)=2$, then  $uu_1$  is a 2-line of $G$ that is not adjacent to $e_1$ and hence, the result holds.
     If $d_G(u_1)\ge3$ and $d_G(u_2)\ge3$, then $d_{G_1}(y)=d_{G_2}(\overline{y})\ge4$ (recall that $|E[\{u_1\},\{\overline{y}\}]|\ge2$ and $|E[\{u_2\},\{\overline{y}\}]|\ge2$). 
    So neither $e_1$ nor $e_2$ is incident with $y$.
    Thus, $e_1$ and $e_2$ are two nonadjacent edges of $G$. 
    Therefore, the result holds.

    Now we assume that for every vertex of degree 2 in $G$, one of its neighbors is of degree two, and the other neighbor is of degree at least three.
    Let $v\in V(G)$, $d_G(v)=d_G(v_1)=2$ and $d_G(v_2)\ge3$, where $N_G(v)=\{v_1,v_2\}$.
    Let $Z:=\{v,v_1,v_2\}$. Similar to the above, we have that $\partial(Z)$ is a nontrivial barrier cut of $G$, $\{v_1,v_2\}$ is a barrier of $G$ and  $v_1v_2\notin E(G)$.  
    Thus, $|E[\{v_1\},\overline{Z}]|=1$ and $|E[\{v_2\},\overline{Z}]|\ge2$.
    Let $H_1:=G/(Z\to z)$ and $H_2:=G/(\overline{Z}\to \overline{z})$. 
   
    Note that in $H_2$, $\overline{z}v_1$ is the only nonremovable edge incident with $\overline{z}$ since $d(v_1)=2$. 
    By Lemma \ref{lem:re_also_re}, every removable edge of $H_1$ is incident with $z$ and $z$ is incident with at most one removable edge.  
    Assume first that $\delta(H_1)\ge3$. If $H_1$ is free of removable edges, then $H_1$ contains two nonadjacent 3-lines by Theorem \ref{thm:main-thm}. If $H_1$ has exactly one removable edge, then $H_1$ contains a 3-line that is not adjacent to the possible removable edge by Corollary \ref{cor:M-C-only-1-RE}. Thus, whether $H_1$ has removable edges or not, there exists a 3-line $f_1$ in $H_1$ that is not incident with $z$. 
    So $f_1$ and $vv_1$ are the two required nonadjacent edges. 
    Next we consider the case  that $H_1$ has  a vertex $w$ of degree 2. 
    Since $d_{H_1}(z)\ge3$, we have $w\neq z$. So $w\in V(G)$.
    By the above assumption,  every vertex of degree 2 in $G$ has exactly one neighbor of degree 2, so $w$ is not  a neighbor of $v_1$. Then there exists a vertex $w_1\in \overline{Z}$ such that $w_1\in N_G(w)$ and $d_G(w_1)=2$ (note that $N_G(w)\subseteq \overline{Z}\cup\{v_2\}$).  
    Thus, $ww_1$ is a 2-line that is nonadjacent to 2-line $vv_1$.
    Therefore, the theorem follows.
    \hfill $\square$
\section{A conjecture}
A connected graph $G$ is {\em $n$-extendable} if it  contains at  least $2n+2$ vertices and a matching of size at least $n$,  and every matching of size $n$ is a subset of a perfect matching of $G$. So 
a matching covered graph with at least four vertices is a 1-extendable graph, and a brace with at least six vertices is a 2-extendable bipartite graph \cite{Lovasz87}. For a minimal brace $G$ on at least 6 vertices, Lou \cite{Lou1999} proved that the minimum degree of  $G$ is 3 and  $G$ has at least $\lceil\frac{2|V(G)|+2}{5} \rceil$ cubic vertices.    
We surmise the following: 
\begin{conj}
Every minimal brace with at least 6 vertices has a 3-line.  
\end{conj}


\begin{thebibliography}{99}

\bibitem{BM08} J. A. Bondy, U. S. R. Murty, Graph Theory, Springer-Verlag, Berlin, 2008.

 

\bibitem{CLM99} M. H. Carvalho, C. L. Lucchesi, U. S. R. Murty, Ear decompositions of matching covered graphs, Combinatorica  19 (2) (1999) 151--174.
  

 \bibitem{CLM15} M. H. Carvalho, C. L. Lucchesi, U. S. R. Murty, Thin edges in braces, Electron. J. Combin.  22 (4) (2015) 4--14. 


\bibitem{ELP82} J. Edmonds, L. Lov\'asz, W. R. Pulleyblank, Brick decompositions and the matching rank of graphs, Combinatorica  2 (3) (1982) 247--274. 

\bibitem{HLX2025}
X. He, F. Lu, J. Xue, Wheel-like bricks and minimal matching covered graphs, J. Graph Theory  111 (2026) 5--16. 

\bibitem{Lou1999} D. Lou, On the structure of minimally $n$-extendable bipartite graphs, Discrete Mathematics 202 (1999) 173--181.  

\bibitem{LP77}
L. Lov\'{a}sz, M. D. Plummer, On minimal elementary bipartite graphs,
J. Combin. Theory, Ser. B  23 (1977) 127--138.

\bibitem{lo} L. Lov\'{a}sz, Ear decompositions of matching covered graphs, Combinatorica 3 (1)  (1983) 105--117.  

\bibitem{LP86} L. Lov\'{a}sz, M. D. Plummer, Matching Theory, Annals of Discrete Mathematics, vol. 29, Elsevier Science, 1986.

\bibitem{Lovasz87} L. Lov\'{a}sz, Matching structure and the matching lattice, J. Combin. Theory, Ser. B  43 (1987) 187--222. 

\bibitem{LV-Unpublished_MS}  
L. Lov\'{a}sz, S. Vempala,
On removable edges in matching covered graphs,
unpublished manuscript.
 

\bibitem{Lucchesi2024}
C. L. Lucchesi, U. S. R. Murty, Perfect Matchings, Springer, 2024.


 



\bibitem{Tutte47} W. T. Tutte, The factorization of linear graphs, J. Lond. Math. Soc. 22 (1947) 107--111.

\bibitem{ZWY2022} Y. Zhang, X. Wang, J. Yuan, Bicritical graphs without removable edges, Discrete Applied Mathematics 320 (2022) 1--10. 






\end{thebibliography}
\end{document}